\title{Rotation covariant local tensor valuations on convex bodies}
\author{Daniel Hug and Rolf Schneider}
\date{}
\newcommand{\R}{{\mathbb R}}
\newcommand{\Sn}{{\mathbb S}^{n-1}}
\newcommand{\Kn}{{\mathcal K}^n}
\newcommand{\Pn}{{\mathcal P}^n}
\newcommand{\cP}{{\mathcal P}}
\newcommand{\Rn}{{\mathbb R}^n}
\newcommand{\N}{{\mathbb N}}
\newcommand{\cH}{\mathcal{H}}
\newcommand{\Ha}{\mathcal{H}}
\newcommand{\bS}{\mathbb{S}}
\newcommand{\T}{\mathbb{T}}
\newcommand{\B}{\mathcal{B}}
\newcommand{\D}{{\rm d}}
\newcommand{\F}{{\mathcal F}}
\newcommand{\rO}{{\rm O}(n)}
\newcommand{\SO}{{\rm SO}(n)}
\newcommand{\3}{{\rm O}(3)}
\newcommand{\4}{{\rm SO}(3)}
\DeclareMathOperator{\Nor}{Nor}
\DeclareMathOperator{\sgn}{sgn}
\newcommand{\fed}{\,\rule{.1mm}{.20cm}\rule{.20cm}{.1mm}\,}
\newtheorem{lemma}{Lemma}
\newtheorem{theorem}{Theorem}
\newtheorem{proposition}{Proposition}
\begin{document}

\maketitle

\begin{abstract}
For valuations on convex bodies in Euclidean spaces, there is by now a long series of characterization and classification theorems. The classical template is Hadwiger's theorem, saying that every rigid motion invariant, continuous, real-valued valuation on convex bodies in $\Rn$ is a linear combination of the intrinsic volumes. For tensor-valued valuations, under the assumptions of isometry covariance and continuity, there is a similar classification theorem, due to Alesker. Also for the local extensions of the intrinsic volumes, the support, curvature and area measures, there are analogous characterization results, with continuity replaced by weak continuity, and involving an additional assumption of local determination. The present authors have recently obtained a corresponding characterization result for local tensor valuations, or tensor-valued support measures (generalized curvature measures), of convex bodies in $\Rn$. The covariance assumed there was with respect to the group ${\rm O}(n)$ of orthogonal transformations. This was suggested by Alesker's observation, according to which in dimensions $n> 2$, the weaker assumption of ${\rm SO}(n)$ covariance does not yield more tensor valuations. However, for tensor-valued support measures, the distinction between proper and improper rotations does make a difference. The present paper considers, therefore, the local tensor valuations sharing the previously assumed properties, but with ${\rm O}(n)$ covariance replaced by ${\rm SO}(n)$ covariance, and provides a complete classification. New tensor-valued support measures appear only in dimensions two and three.

\medskip

{\em Key words and phrases:} Valuation; Minkowski tensor; local tensor valuation; tensor-valued support measures; rotation covariance; classification theorem

\medskip

{\em Mathematics subject classification:}  52A20, 52B45
\end{abstract}

\section{Introduction}\label{sec1}

The ultimate goal of this paper is a classification result for continuous, rotation covariant valuations on convex bodies with values in a space of tensor-valued measures. Well-known characterization results of Hadwiger and Alesker are landmarks in this line of research, which we now briefly explain. 

A {\em valuation} on the space $\Kn$ of convex bodies in Euclidean space $\Rn$ is a mapping $\varphi$ from $\Kn$ into some abelian group satisfying 
$$\varphi(K\cup M)+\varphi(K\cap M)=\varphi(K)+\varphi (M)$$ 
for all $K,M\in \Kn$ with $K\cup M\in \Kn$. Of geometric interest are mainly those valuations which have a simple behaviour under some transformation group of $\Rn$ and which have certain continuity properties; in recent investigations, even suitable smoothness assumptions play an important role. Hadwiger's celebrated characterization theorem says that the vector space of real-valued valuations on $\Kn$ which are invariant under rigid motions and are continuous with respect to the Hausdorff metric, is spanned by the intrinsic volumes  (\cite{Had52}, reproduced in \cite[6.1.10]{Had57}). A similar result for $\Rn$-valued valuations was proved by Hadwiger and Schneider \cite{HS71}, based on a characterization result in \cite{Sch72}. A first systematic study of valuations with values in a space of symmetric tensors on $\Rn$ was begun by McMullen \cite{McM97}. A corresponding classification theorem was proved by Alesker \cite{Ale99b}, based on his earlier results in \cite{Ale99a}. The isometry covariance underlying this result includes a certain polynomial behaviour under  translations and covariance with respect to the orthogonal group ${\rm O}(n)$ of $\Rn$.

Once the tensor-valued generalizations of the intrinsic volumes, also known as Minkowski tensors, had been introduced, they were investigated and applied under various different viewpoints; see, for example, the Lecture Notes \cite{KJ16}. So they appeared in integral geometry (\cite{BH14}, \cite{HSS08b}, \cite{Sch00}, \cite{SS06}), in stochastic geometry, stereology and image analysis (\cite{HHKM14}, \cite{HKS15}, \cite{KKH14}, \cite{SS02}), and have been applied to different topics in physics (\cite{BDMW02}, \cite{MKSM13}, \cite{SchT10}, \cite{SchT11}, \cite{SchT13}). For the latter applications, dimension three, which is in the focus of the present paper, is of particular interest.

The classical intrinsic volumes can be localized: they are just the total measures of the curvature measures, area measures, or support measures associated with a convex body. Here a curvature measure of a convex body is concentrated on (Borels sets of, in each case) boundary points, an area measure on unit normal vectors, and a support measure on support elements, that is, pairs of boundary point and normal vector at the point. In dependence on the convex bodies, these measures are valuations and are weakly continuous. Further, they have certain properties of covariance with respect to motion groups and of local determination. Classification theorems assuming these properties were proved for area measures in \cite{Sch75} and for curvature measures in \cite{Sch78}. In the case of support measures on convex polytopes, it was first observed by Glasauer \cite[Lem.~1.3]{Gla97} that the properties of rigid motion equivariance and local determination are sufficient for a characterization theorem. Thus, no continuity assumptions are required, and the valuation property is a consequence.

Also the tensor valuations can be localized, which leads to the local Minkowski tensors or, as Saienko \cite{Sai16} suggests to call them, tensor-valued curvature measures, or, in our case, tensor-valued support measures. When restricted to polytopes, they can be completely classified under the sole assumptions of isometry covariance and local determination. This was essentially done in \cite{Sch13} and later slightly strengthened, 
see Theorem 2.2 in \cite{HS14}. The question, which of these local tensor valuations have weakly continuous extensions to all convex bodies, was completely settled in \cite{HS14}.

The isometry covariance that is assumed in the previous classification of local tensor valuations, as well as in Alesker's \cite{Ale99b} characterization theorem, comprises covariance with respect to the group $\rO$ of orthogonal transformations. If one assumes only covariance with respect to the group $\SO$ of proper rotations (orientation preserving orthogonal transformations), then Alesker \cite[Sec. 4]{Ale99b} pointed out that in his classification theorem one gets more tensor valuations if $n=2$, but not if $n\ge 3$. Therefore, it came as a surprise when Saienko, in his work on smooth tensor-valued curvature measures, discovered that in dimension three there are such valuations which are covariant with respect to $\4$, but not with respect to $\3$. Why this is consistent with Alesker's assertion, is explained in Section \ref{sec8}.

Saienko's discovery in the smooth case was a motivation to revisit the classifications in \cite{Sch13} and \cite{HS14} and to replace the assumption of $\rO$-covariance by that of $\SO$-covariance. In \cite{HS16}, where local tensor valuations on polytopes without any continuity assumption are considered, it was found that the classification obtained in \cite{Sch13} and \cite[Thm. 2.2]{HS14} remains unchanged in dimensions $n \ge 4$, but that new $\SO$ covariant local tensor valuations appear for $n=2$ and $n=3$. They were completely classified in \cite{HS16}. The purpose of the present paper is now to find out which of these have a weakly continuous extension to all convex bodies, and to extend the classification theorem correspondingly. The main result, whose precise formulation requires some more preparations, is Theorem \ref{Thm4.1} in Section \ref{sec4}. 

The proof of our main classification result is completed in Section \ref{sec7}. Sections \ref{sec8} and \ref{sec9} are then devoted to the tensor valuations that are defined by the total measures of the ${\rm SO}(n)$ covariant but not ${\rm O}(n)$ covariant local tensor valuations that exist in dimensions $n=2$ and $n=3$. For $n=3$ we show in Section \ref{sec8} that they are zero, and in Section {\ref{sec9} we determine for $n=2$ all linear dependences between them.

\section{Notation and Preliminaries}\label{sec2}

We introduce the basic notations for a general dimension $n\ge 2$. The $n$-dimensional real vector space $\Rn$ is equipped with its standard scalar product $\langle\cdot\,,\cdot\rangle$ and the induced norm $\|\cdot\|$. We also assume that $\Rn$ is endowed with a fixed orientation. The $k$-dimensional Hausdorff measure on $\Rn$ is denoted by $\cH^k$. We need the unit sphere $\Sn$ and the product $\Sigma^n=\Rn\times\Sn$, both with their standard topologies. The constant $\cH^{n-1}(\Sn)=\omega_n= 2\pi^{n/2}/\Gamma(n/2)$ appears occasionally. If $L\subset\Rn$ is a linear subspace, we write ${\mathbb S}_L= \Sn\cap L$. By $\rO$ we denote the orthogonal group of $\Rn$, that is, the group of linear transformations preserving the scalar product, and $\SO$ is the subgroup of proper rotations, which preserve also the orientation. The Grassmannian of $k$-dimensional linear subspaces of $\Rn$ is denoted by $G(n,k)$.

The set $\Kn$ of convex bodies (nonempty, compact, convex subsets) in $\Rn$ is equipped with the Hausdorff metric and its induced topology. The subset of polytopes is denoted by $\Pn$. For a polytope $P$, the set of its $k$-dimensional faces is denoted by $\F_k(P)$, for $k=0,\dots,\dim P$. For a face $F$, we write $L(F)={\rm lin}(F-F)$; this is the linear subspace that is parallel to the affine hull of $F$ and is called the {\em direction space} of $F$. The normal cone of $P$ at its face $F$ is denoted by $N(P,F)$, and $\nu(P,F)= N(P,F)\cap\Sn\subset {\mathbb S}_{L(F)^\perp}$ is the set of outer unit normal vectors of $P$ at $F$. The generalized normal bundle (or normal cycle) of $P$ is the subset ${\rm Nor}\,P\subset\Sigma^n$ consisting of all pairs $(x,u)$ such that $x$ is a boundary point of $P$ and $u$ is an outer unit normal vector of $P$ at $x$. The same notation and terminology is used for general convex bodies.  

We recall the conventions on tensors that were used in \cite{Sch13}, \cite{HS14}, \cite{HS16}. For $p\in\N_0$, we denote by $\T^p$ the real vector space of symmetric tensors of rank $p$ on $\R^n$. The scalar product $\langle\cdot\,,\cdot\rangle$ of $\R^n$ is used to identify $\R^n$ with its dual space, so that each vector $a\in\R^n$ is identified with the linear functional $x\mapsto \langle a,x\rangle$, $x\in\R^n$. Thus, $\T^1$ is identified with $\R^n$ (and $\T^0$ with $\R$), and for $p\ge 1$, each tensor $T\in\T^p$ is considered as a symmetric $p$-linear functional on $\R^n$. The symmetric tensor product $a\odot b$ is abbreviated by $ab$, and for $x\in\R^n$, the $r$-fold symmetric tensor product $x\odot\dots\odot x$ is denoted by $x^r$, with $x^0:=1$. 

The {\em metric tensor} $Q$ on $\R^n$ is defined by $Q(x,y):=\langle x,y\rangle$ for $x,y\in \R^n$. For a subspace $L\in G(n,k)$, we denote by $\T^p(L)$ the space of symmetric $p$-tensors on $L$. The tensor $Q_L$ on $\Rn$ is defined by
$$ Q_L(a,b):= \langle \pi_La,\pi_Lb\rangle\quad\mbox{for } a,b\in\Rn,$$
where $\pi_L:\Rn\to L$ denotes the orthogonal projection.

Generally for a topological space $X$, we denote by $\B(X)$ the $\sigma$-algebra of its Borel sets. 

In the following, we are concerned with mappings
$$ \Gamma: \Kn\times\B(\Sigma^n)\to \T^p.$$
We say, briefly, that such a mapping is a {\em valuation} if $\Gamma(\cdot,\eta)$ is a valuation for each $\eta\in\B(\Sigma^n)$. If $\Gamma(K,\cdot)$ is a $\T^p$-valued measure for each $K\in\Kn$, then $\Gamma$ is called {\em weakly continuous} if
$$ \lim_{i\to\infty} \int_{\Sigma^n} f\,\D\Gamma(K_i,\cdot)= \int_{\Sigma^n} f\,\D\Gamma(K,\cdot)$$
holds for each sequence $(K_i)_{i\in{\mathbb N}}$ of convex bodies in $\Kn$ with limit $K$ and for each continuous function $f:\Sigma^n\to\R$. In the following, for $\eta\subset \Sigma^n$ we write $\eta+t:= \{(x+t,u):(x,u)\in\eta\}$ for $t\in\Rn$, $\lambda\eta:= \{(\lambda x,u): (x,u)\in\eta\}$ for $\lambda\ge 0$, and $\vartheta\eta:=\{(\vartheta x,\vartheta u): (x,u)\in\eta\}$ for $\vartheta\in {\rm O}(n)$. The mapping $\Gamma$ is called {\em translation covariant} of degree $q\le p$ if
\begin{equation}\label{2.1} 
\Gamma(K+t,\eta+t) =\sum_{j=0}^q \Gamma_{p-j}(K,\eta)\frac{t^j}{j!}
\end{equation}
with tensors $\Gamma_{p-j}(K,\eta)\in\T^{p-j}$, for $K\in\Kn$, $\eta\in\B(\Sigma^n)$, and $t\in\Rn$. Here $\Gamma_p=\Gamma$. If $\Gamma$ is translation covariant of degree zero, it is called {\em translation invariant}, and $\Gamma$ is just called {\em translation covariant} if it is translation covariant of some degree $q\le p$. The mapping $\Gamma$ is called ${\rm O}(n)$ {\em covariant} if $\Gamma(\vartheta K,\vartheta \eta)= \vartheta \Gamma(K,\eta)$ for $K\in\Kn$, $\eta\in\B(\Sigma^n)$, $\vartheta\in{\rm O}(n)$. Here the operation of  ${\rm O}(n)$ on $\T^p$ is defined by $(\vartheta T)(x_1,\dots,x_p):= T(\vartheta^{-1}x_1,\dots,\vartheta^{-1}x_p)$ for $x_1,\dots,x_p\in\R^n$ and $\vartheta\in {\rm O}(n)$. Similarly, ${\rm SO}(n)$-covariance is defined. We say that the mapping $\Gamma$ is {\em locally defined} if $\eta\cap {\rm Nor}\,K = \eta'\cap {\rm Nor}\,K'$ with $K,K'\in\Kn$ and $\eta,\eta'\in\B(\Sigma^n)$ implies $\Gamma(K,\eta)=\Gamma(K',\eta')$. The mapping $\Gamma$ is {\em homogeneous} of degree $k$ if $\Gamma(\lambda K,\lambda\eta)=\lambda^k\Gamma(K,\eta)$. Corresponding definitions are used if $\Kn$ in the definition of $\Gamma$ is replaced by $\Pn$.

The following types of tensor-valued support measures must be distinguished. The {\em local Minkowski tensors} are defined by
\begin{equation}\label{A} 
\phi_k^{r,s}(K,\eta) =  \frac{\omega_{n-k}}{r!s!\,\omega_{n-k+s}}\int_\eta x^ru^s\,\Lambda_k(K,\D(x,u))
\end{equation}
for $K\in\Kn$, $\eta\in\B(\Sigma^n)$, $r,s\in{\mathbb N}_0$, $k\in\{0,\dots,n-1\}$, where $\Lambda_0,\dots,\Lambda_{n-1}$ are the support measures (see \cite[Sec. 2]{HS14} for explanations). For a polytope $P\in\Pn$, there is a more explicit expression, namely
$$ \phi_k^{r,s}(P,\eta)= \frac{1}{r!s!\,\omega_{n-k+s}}\sum_{F\in\F_k(P)} \int_F \int_{\nu(P,F)} {\bf 1}_\eta(x,u)x^ru^s\,\cH^{n-k-1}(\D u)\,\cH^k(\D x).$$
The {\em generalized local Minkowski tensors} of a polytope $P\in\Pn$ were in \cite{HS14} defined by
\begin{equation}\label{4.1}  
\phi_k^{r,s,j}(P,\eta) = \frac{1}{r!s!\,\omega_{n-k+s}} \sum_{F\in\F_k(P)} Q_{L(F)}^{\hspace*{1pt}j}\int_F \int_{\nu(P,F)} {\bf 1}_\eta(x,u) x^r u^s\, \Ha^{n-k-1}(\D u)\,\Ha^k(\D x)
\end{equation}
for $\eta\in\B(\Sigma^n)$, $k\in\{0,\dots,n-1\}$, $r,s\in\N_0$, and for $j\in{\mathbb N}_0$ if $k>0$, but only $j=0$ if $k=0$. 

The mapping defined by $\Gamma(P,\eta)= \phi_k^{r,s,j}(P,\eta)$, for fixed $k,r,s,j$, has the following properties. It is a valuation. For each $P\in\Pn$, $\Gamma(P,\cdot)$ is a $\T^p$-valued measure, with $p=2j+r+s$. The mapping $\Gamma$ is translation covariant, ${\rm O}(n)$ covariant, and locally defined. These properties are not changed (except that the rank must be adjusted) if $\Gamma$ is multiplied (symmetrically) by a power of the metric tensor.

It was proved in \cite{HS14} that the mapping $ \phi_k^{r,s,j}$ has a weakly continuous extension (denoted by the same symbol) to $\Kn\times\B(\Sigma^n)$ if $k=n-1$ or if $j\in\{0,1\}$, but not if $j\ge 2$ and $1\le k\le n-2$. Then, the following classification result was obtained (\cite[Thm. 2.3]{HS14}).

\begin{theorem}\label{Thm2.1}
For $p\in\N_0$, let $T_p(\Kn)$ denote the real vector space of all mappings $\Gamma:\Kn\times\B(\Sigma^n)\to\T^p$ with the following properties.\\
$\rm (a)$ $\Gamma(K,\cdot)$ is a $\T^p$-valued measure, for each $K\in\Kn$,\\ 
$\rm (b)$ $\Gamma$ is translation covariant and $\rO$ covariant,\\
$\rm (c)$ $\Gamma$ is locally defined,\\
$\rm (d)$ $\Gamma$ is weakly continuous.

Then a basis of $T_p(\Kn)$ is given by the mappings $Q^m\phi^{r,s,j}_k$, where $m,r,s\in\N_0$ and $j\in\{0,1\}$ satisfy $2m+2j+r+s=p$, where $k\in\{0,\dots,n-1\}$, and where $j=0$ if $k\in\{0,n-1\}$.
\end{theorem}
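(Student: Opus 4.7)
The plan is to reduce to the polytope case, where a classification is available without any continuity assumption, and then to use weak continuity to cut down the list of admissible generators. More precisely, I would proceed in three stages: (i) confirm that each proposed basis element lies in $T_p(\Kn)$; (ii) prove spanning by restriction to $\Pn$ followed by weakly continuous extension; (iii) deduce linear independence from the polytope case.

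For stage (i), the algebraic properties (a)--(c) of $\phi_k^{r,s,j}$, and hence of $Q^m\phi_k^{r,s,j}$, are immediate from the defining formula \eqref{4.1}: the measure property holds per face, translation covariance follows from the polynomial behaviour of $F\mapsto x^r$ under shifts, $\rO$ covariance is visible from the invariance of $L(F)$, $\nu(P,F)$ and the Hausdorff measures, and local determination is built into the support-measure construction. The non-trivial point is weak continuity. For the classical case $j=0$ this is the standard weak continuity of the local Minkowski tensors $\phi_k^{r,s}$, which in turn rests on the weak continuity of the support measures $\Lambda_k(K,\cdot)$. For $j=1$ one must show that the face-wise weight $Q_{L(F)}$ can be reconstructed as a continuous functional of the support element data; this is done in \cite{HS14} by rewriting $Q_{L(F)}$ in terms of integrals over $\nu(P,F)$ against the projection structure carried by the generalized normal bundle, extending the identity to all $K\in\Kn$ via the Cauchy--Kubota formulas. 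For $k=n-1$ the direction space $L(F)$ is a hyperplane orthogonal to a single normal vector, so $Q_{L(F)}^j$ is itself a continuous function on $\Sigma^n$ and weak continuity is automatic for every $j$.

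For stage (ii), let $\Gamma\in T_p(\Kn)$. Its restriction $\Gamma|_{\Pn}$ satisfies assumptions (a)--(c) on polytopes, so by the polytope classification of \cite{Sch13} together with \cite[Thm.~2.2]{HS14} one has a representation
\begin{equation*}
\Gamma|_{\Pn}=\sum_{(m,r,s,j,k)\in I} c_{m,r,s,j,k}\, Q^m\phi_k^{r,s,j},
\end{equation*}
where $I$ is the full index set allowed in the polytope theorem (in particular, $j$ may be arbitrary when $1\le k\le n-2$). The point is now to show that every coefficient with $j\ge 2$ and $1\le k\le n-2$ must vanish. This is where weak continuity enters: the assumption (d) means that $\Gamma|_{\Pn}$ extends weakly continuously to $\Kn$, while the non-extendability result of \cite{HS14} says that no non-trivial linear combination of the mappings $Q^m\phi_k^{r,s,j}$ with $j\ge 2$ and $1\le k\le n-2$ does. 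Combining this with the fact that the extendable mappings (those with $j\in\{0,1\}$, and $j=0$ when $k\in\{0,n-1\}$) are themselves linearly independent on $\Pn$, we conclude that the obstructed coefficients are forced to be zero. Thus $\Gamma|_{\Pn}$ is a linear combination of the proposed basis elements, and by weak continuity together with the density of $\Pn$ in $\Kn$, the same relation holds on all of $\Kn$.

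For stage (iii), linear independence of the proposed basis elements already holds on polytopes (as part of the polytope classification) and therefore also on $\Kn$. The main obstacle of the whole argument is the non-extendability step invoked in stage (ii): one needs explicit sequences $P_i\to K$ of polytopes converging to a sufficiently smooth body such that $\int f\,\D \phi_k^{r,s,j}(P_i,\cdot)$ fails to converge for some continuous test function $f$ when $j\ge 2$ and $1\le k\le n-2$. Heuristically, as $P_i$ approximates a curved piece of $\partial K$, the $k$-faces flatten and their direction spaces oscillate, so $Q_{L(F)}^j$ develops discontinuous limit behaviour that cannot be compensated by the remaining continuous factors; making this precise (by exhibiting an appropriate family of polytopal approximations of, e.g., a smooth convex body with positive Gaussian curvature) is the hard technical core and was carried out in \cite{HS14}.
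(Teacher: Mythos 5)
The paper does not prove Theorem~\ref{Thm2.1}; it quotes it from \cite[Thm.~2.3]{HS14} and uses it as a black box. Your outline matches the strategy indicated in the text surrounding the theorem: verify that the proposed basis elements lie in $T_p(\Kn)$, restrict an arbitrary $\Gamma\in T_p(\Kn)$ to $\Pn$ and invoke the polytope classification, and then use weak continuity together with the non-extendability results of \cite{HS14} to eliminate the coefficients with $j\ge 2$ and $1\le k\le n-2$. Three remarks are worth flagging. First, the argument requires non-extendability of arbitrary linear combinations of the obstructed mappings $Q^m\phi_k^{r,s,j}$, not merely of each $\phi_k^{r,s,j}$ individually; you do state the stronger version, but note that the paper's sentence in Section~\ref{sec2} only asserts the individual statement, and the stronger version needs its own argument (carried out in \cite{HS14}, and mirrored for the ${\rm SO}(3)$ analogue in Section~\ref{sec7} here). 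Second, in the spanning step, the appeal to linear independence of the \emph{extendable} mappings is not what does the work; what you need is linear independence of the full family on $\Pn$ (to get uniqueness of the coefficients) combined with the non-extendability of the obstructed part, whence the obstructed coefficients vanish. Third, your account of how $j=1$ weak continuity is established (reconstructing $Q_{L(F)}$ via Cauchy--Kubota) is not how \cite{HS14} proceeds; the actual mechanism is a normal-cycle / tensor-valued differential-form representation of $\phi_k^{r,s,1}$ on general bodies, of precisely the type the present paper develops in Section~\ref{sec3} for $\widetilde\phi^{r,s,0}$. None of these points undermines the skeleton of the proof, which is sound and consistent with the cited source.
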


Aiming at replacing $\rO$-covariance by $\SO$-covariance, it turned out in \cite{HS16} that we had to introduce further local tensor valuations in dimensions two and three. For $P\in\cP^3$ and $\eta\in\B(\Sigma^3)$, let
\begin{equation}\label{2.2} 
\widetilde\phi^{r,s,j}(P,\eta) = \sum_{F\in{\mathcal F}_1(P)}  v_F^{2j+1}\int_F\int_{\nu(P,F)} {\bf 1}_\eta(x,u)x^r(v_F\times u) u^s\,\cH^1(\D u)\, \cH^1(\D x),
\end{equation}
where $r,s,j\in{\mathbb N}_0$. The vector $v_F$ is one of the two unit vectors (arbitrarily chosen) parallel to the edge $F$, and $v_F\times u$ denotes the vector product of the vectors $v_F$ and $u$. The right side of (\ref{2.2}) is independent of the choice of the vector $v_F$.

For $n=2$ and for $u\in {\mathbb S}^1$, let $\overline u\in {\mathbb S}^1$ be the unique vector for which $(u,\overline u)$ is a positively oriented orthonormal basis of $\R^2$. For $P\in{\mathcal P}^2$, $k\in\{0,1\}$ and $\eta \in \B(\Sigma^2)$ we define
\begin{equation}\label{2.3} 
\widetilde \phi_k^{r,s}(P,\eta)= \sum_{F\in\F_k(P)} \int_F \int_{\nu(P,F)} {\bf 1}_\eta(x,u) x^r \overline u\, u^s\,\Ha^{1-k}(\D u)\,\Ha^k(\D x).
\end{equation}

The mapping defined by $\Gamma(P,\eta):= \widetilde\phi^{r,s,j}(P,\eta)$, for fixed $r,s,j$ if $n=3$, and by  $\Gamma(P,\eta):= \widetilde\phi^{r,s}_k(P,\eta)$ for fixed $r,s,k$ if $n=2$, has the following properties. It is a valuation. For each $P\in\cP^n$, $\Gamma(P,\cdot)$ is a $\T^p$-valued measure, for suitable $p$. $\Gamma$ is translation covariant, ${\rm SO}(n)$ covariant, and locally defined. If $\vartheta\in {\rm O}(n)$ changes the orientation, then $\Gamma(\vartheta P,\vartheta \eta)= -\vartheta \Gamma(P,\eta)$.

The following result was proved in \cite{HS16}.

\begin{theorem}\label{Thm2.2}
For $p\in\N_0$, let $\widetilde T_p(\Pn)$ denote the real vector space of all mappings $\Gamma:\Pn\times\B(\Sigma^n)\to\T^p$ with the following properties.\\[1mm]
$\rm (a)$ $\Gamma(P,\cdot)$ is a $\T^p$-valued measure, for each $P\in\Pn$,\\[1mm]
$\rm (b)$ $\Gamma$ is translation covariant and ${\rm SO}(n)$ covariant,\\[1mm]
$\rm (c)$ $\Gamma$ is locally defined.

Then a basis of $\widetilde T_p(\Pn)$ is given by the mappings $Q^m\phi^{r,s,j}_k$, where $m,r,s,j\in\N_0$ satisfy $2m+2j+r+s=p$, where $k\in\{0,\dots,n-1\}$, and where $j=0$ if $k\in\{0,n-1\}$, together with\\[1mm]
$\bullet$ if $n\ge 4$, no more mappings,\\[1mm]
$\bullet$ if $n=3$, the mappings $Q^m\widetilde \phi^{r,s,j}$, where $m,r, s,j\in\N_0$ satisfy $2m+2j+r+s+2=p$,\\[1mm]
$\bullet$ if $n=2$, the mappings $Q^m\widetilde \phi_k^{r,s}$, where $m,r,s\in\N_0$ satisfy $2m+r+s+1=p$ and where \hspace*{6pt}  $k\in\{0,1\}$.
\end{theorem}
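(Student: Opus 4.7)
My strategy is to split any $\Gamma\in\widetilde T_p(\Pn)$ into an $\rO$-covariant summand $\Gamma^+$ and an ``orientation-odd'' summand $\Gamma^-$, reduce $\Gamma^+$ to the known polytopal classification, and then enumerate the possibilities for $\Gamma^-$ by elementary $\SO(n)$-invariant theory. Fix an improper orthogonal map $\sigma\in\rO\setminus\SO$ and set
\[
\Gamma^{\pm}(P,\eta):=\tfrac12\bigl[\Gamma(P,\eta)\pm\sigma^{-1}\Gamma(\sigma P,\sigma\eta)\bigr].
\]
Both $\Gamma^\pm$ inherit from $\Gamma$ the valuation, measure, translation covariance and local-determination properties; $\Gamma^+$ is moreover $\rO$-covariant, while $\Gamma^-$ satisfies the reversal rule $\Gamma^-(\sigma P,\sigma\eta)=-\sigma\Gamma^-(P,\eta)$. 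The $\rO$-covariant polytopal classification (\cite{Sch13} and \cite[Thm.~2.2]{HS14}) then expresses $\Gamma^+$ as a linear combination of the $Q^m\phi^{r,s,j}_k$; the remaining task is to classify $\Gamma^-$.

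Using local determination, I write
\[
\Gamma^-(P,\eta)=\sum_{k=0}^{n-1}\sum_{F\in\F_k(P)}\int_F\int_{\nu(P,F)}{\bf 1}_\eta(x,u)\,\Phi_{L(F)}(x,u)\,\cH^{n-k-1}(\D u)\,\cH^k(\D x),
\]
where the kernel $\Phi_L\colon L\times\s_{L^\perp}\to\T^p$ depends on the face only through its direction space $L$. The constraints on $\Phi_L$ are: (i) $\SO(n)$-equivariance together with the sign-changing rule under improper maps, and (ii) polynomiality in $x$ with \emph{symmetric} tensorial coefficients, forced by (\ref{2.1}). Constraint (ii) is the crucial filter: for example $u\times x$ is $\SO(3)$-equivariant and orientation-odd, yet $u\times(x+t)$ is not a symmetric-tensorial polynomial in $t$, so no kernel containing such a factor can survive. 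This is precisely what suppresses the ``obvious'' odd kernels on faces of dimension $k\ge 2$ in $\R^3$.

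An orientation-odd $\SO(n)$-equivariant construction uses the ambient volume form and so requires $n$ independent vector arguments. The available vectors are $u\in L^\perp\cap\Sn$ and, when $\dim L=1$, a unit generator $v_F$ of $L$; the variable $x$ is excluded by (ii), and no canonical direction vector is attached to faces of other dimensions. A short case analysis gives: for $n\ge 4$ no odd kernel exists, so $\Gamma^-=0$; for $n=3$ odd kernels appear only at $k=1$ and, modulo symmetric powers of $Q$, reduce to an expression of the form $v_F^{2j+1}(v_F\times u)u^s$, which integrated against $x^r$ reproduces $\widetilde\phi^{r,s,j}$ as in (\ref{2.2}); for $n=2$ the $90^\circ$-rotation $\bar u$ is an odd construction depending only on $u$, which trivially satisfies (ii) and recovers $\widetilde\phi_k^{r,s}$ for both $k=0$ and $k=1$.

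The main obstacle is the interaction between (i) and (ii): many $\SO(n)$-equivariant orientation-odd pseudotensorial constructions are ruled out by (ii), and showing that the surviving list is \emph{exactly} the stated one requires careful bookkeeping of how powers of $Q$ and $Q_{L(F)}$ combine with the pseudoinvariants $v_F\times u$ and $\bar u$. Linear independence of the full basis then follows by evaluation on test simplices of varying dimension with suitably chosen Borel sets $\eta$, as in the argument of \cite{HS14}: the face-dimension $k$ separates the contributions via the support of $\Gamma(P,\cdot)$, and the tensor rank together with the parity distinguishes the exponents $r,s,j,m$.
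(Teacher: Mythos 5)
The paper does not prove Theorem \ref{Thm2.2}; it is stated and used as a result of the companion paper \cite{HS16}, so there is no internal argument to compare against. Judging your proposal on its own merits: the even/odd splitting $\Gamma=\Gamma^++\Gamma^-$ with respect to a fixed improper $\sigma\in\rO\setminus\SO$ is correct (one checks that $\Gamma^+$ is $\rO$-covariant, $\Gamma^-$ reverses sign, and the decomposition is independent of the chosen $\sigma$), and it is the natural first reduction. The difficulties all lie downstream of that step, and your sketch leaves them open.

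Concretely, three gaps. First, the representation of $\Gamma^-$ as a face-by-face integral with a kernel $\Phi_{L(F)}(x,u)$ depending only on the direction space of the face is \emph{not} a consequence of local determination alone; establishing that a locally defined, translation-covariant, measure-valued valuation on polytopes admits such a representation is precisely the hard core of \cite{Sch13} and \cite[Thm.~2.2]{HS14}, and you invoke it as a free starting point. Second, your exclusion of a kernel containing $u\times x$ is misargued: $u\times(x+t)=u\times x+u\times t$ is linear, hence polynomial, in $t$, so polynomiality is not what fails; what actually kills such terms is the mismatch between the symmetric-tensor structure forced by (\ref{2.1}) and the antisymmetry of cross-product/determinant constructions, and this needs a real argument. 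Third, the case analysis is stated as a conclusion rather than performed: in $\R^3$, for $k=0$ and $k=2$ the volume form $\det(u,\xi_i,\xi_j)$ \emph{is} available as a function of $u$ and the tensor arguments, and the reason it disappears is that it vanishes under the symmetrization defining $\T^p$; for $n\ge4$ one must rule out every $\SO(n)$-equivariant orientation-odd kernel in the same way, and this is not shown. You candidly note that the ``bookkeeping'' is the main obstacle; that bookkeeping is exactly the proof, and is absent.
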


In order to extend this theorem from $\cP^n$ to $\Kn$, under additional continuity assumptions, we have to investigate which of the mappings $\widetilde \phi^{r,s,j}$ and  $\widetilde \phi_k^{r,s}$ have weakly continuous extensions from $\cP^n$ to $\Kn$, for $n=3$ respectively $n=2$. The next section provides weakly continuous extensions for $n=2$ and for $n=3$, $j=0$. The rest of the paper will then reveal that there are no such extensions in the remaining cases.

\section{Weakly Continuous Extensions}\label{sec3}

The case $n=2$ is easily settled, because for $K\in {\mathcal K}^2$ we can define
\begin{equation}\label{B}  
\widetilde\phi_k^{r,s}(K,\eta) =\int_\eta x^r\overline u u^s\,\Theta_k(K,\D(x,u)),
\end{equation}
with the support measure $\Theta_k$ defined in \cite[Sec.~4.2]{Sch14}. For $P\in{\mathcal P}^2$, this is consistent with (\ref{2.3}) (by \cite[(4.3)]{Sch14}). Since the support measures are weakly continuous, we obtain that $\widetilde\phi_k^{r,s}$ thus defined is weakly continuous on ${\mathcal K}^2$.

Now we turn to the case $n=3$. 
The construction of a weakly continuous extension to all convex bodies of the functional
 $\widetilde\phi^{r,s,0}$, defined so far on $\mathcal{P}^3$, requires some preparations. The basic strategy is the same 
as in \cite[Sec.~4]{HS14}. The main task is to define suitable smooth tensor-valued differential forms $\psi^{r,s}$ of degree $n-1=2$ 
on $\R^6=\R^3\times\R^3$ taking values in $\mathbb{T}^{r+s+2}$. For a convex body $K\in\mathcal{K}^3$, the normal cycle $T_K$ 
is defined by 
$$
T_K:=\left(\mathcal{H}^{2}\fed\Nor K\right)\wedge a_K,
$$
where 
$$
a_K(x,u):=a_1(x,u)\wedge  a_{2}(x,u)
$$
is a $2$-vector in $\R^6$ which determines an orientation of the two-dimensional approximate tangent space 
$\textrm{Tan}^{2}(\mathcal{H}^{2}\fed \Nor K,(x,u))$ of the generalized normal bundle $\Nor K$ of $K$, 
for $\mathcal{H}^{2}$-almost all $(x,u)\in\Nor K$. We refer to Federer's book \cite{Fed69} for basic terminology and results 
of geometric measure theory, and to \cite[Sec.~4]{HS14} for further details and references. In particular, we choose an orthonormal 
basis $(b_1(x,u),b_2(x,u))$ in the orthogonal complement $u^\perp$ of $u\in\mathbb{S}^2$ such that $(b_1(x,u),b_2(x,u),u)$ is a positively oriented orthonormal basis of $\R^3$ and such that
$$
a_i(x,u):=\left( \frac{1}{\sqrt{1+{k_i(x,u)}^2}}\,b_i(x,u),\frac{k_i(x,u)}{\sqrt{1+{k_i(x,u)}^2}}\,b_i(x,u)\right),\quad i=1,2,
$$
with $k_i(x,u)\in[0,\infty]$. Note that if $P\in\mathcal{P}^3$ , $F\in\mathcal{F}_1(P)$, $x\in \text{relint}\, F$ 
and $u\in\text{relint}\, \nu(P,F)$, then we can choose $b_1(x,u)=v_F$ with $k_1(x,u)=0$ and 
$b_2(x,u)=u\times b_1(x,u)$ with $k_2(x,u)=\infty$, where the usual convention $1/\sqrt{1+k_i(x,u)^2}=0$ and 
$k_i(x,u)/\sqrt{1+k_i(x,u)^2}=1$ for $k_i(x,u)=\infty$ is used in the following. 

In a first step, for $(x,u)\in\R^6$, $\mathbf{v}=(v_1,\ldots,v_{r+s+2})\in (\R^3)^{r+s+2}$ and $\xi_1,\xi_2\in\R^6=\R^3\times\R^3$ 
we define
\begin{eqnarray*}
 \widetilde{\psi}^{r,s}(x,u;\mathbf{v};\xi_1,\xi_{2})&:=&-\, x^r(v_1,\ldots,v_r)u^s(v_{r+1},\ldots,v_{r+s})\\
& &  \times\sum_{\sigma\in {\mathcal S}(2)}\sgn(\sigma)\left\langle v_{r+s+1},\Pi_1\xi_{\sigma(1)}\right\rangle 
\left\langle v_{r+s+2},\Pi_2\xi_{\sigma(2)}\right\rangle.
\end{eqnarray*}
Here and below, ${\mathcal S}(n)$ denotes the group of permutations of the set $\{1,\dots,n\}$, and 
$\Pi_1,\Pi_2:\R^3\times\R^3\to\R^3$ are the projections $\Pi_1(a,b):=a$ and 
$\Pi_2(a,b):=b$, so that 
$$
\widetilde{\psi}^{r,s}(x,u;\mathbf{v};\cdot)\in \text{$\bigwedge$}^{2}\,\R^6.
$$
In a second step,  we symmetrize
$$ {\psi}^{r,s}(x,u;\mathbf{v};\xi_1,\xi_{2})\\
 :=\frac{1}{(r+s+2)!}\sum_{\tau\in {\mathcal S}(r+s+2)}
\widetilde{\psi}^{r,s}(x,u;v_{\tau(1)},\ldots,v_{\tau(r+s+2)};\xi_1,\xi_{2}).
$$
Hence, for any $r,s\in\N_0$ we have defined smooth tensor-valued differential forms $\psi^{r,s}\in\mathcal{E}^2(\R^6,\mathbb{T}^{r+s+2})$. 

The next lemma shows that these differential forms are suitably defined, for the construction of an extension of 
the new tensor valuations $\widetilde{\phi}^{r,s,0}$ to all convex bodies.

\begin{lemma}\label{lem3.1}
If $P\in\mathcal{P}^3$ and $\eta\in\mathcal{B}(\Sigma^3)$, then 
$$
T_P\left(\mathbf{1}_\eta \psi^{r,s}\right)=\widetilde{\phi}^{r,s,0}(P,\eta).
$$
\end{lemma}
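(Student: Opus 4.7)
My plan is to evaluate $T_P(\mathbf{1}_\eta\psi^{r,s})$ by decomposing $\Nor P$ according to the face structure of $P$ and reducing the pairing $\langle\psi^{r,s},a_P\rangle$ to a face-by-face computation. For $P\in\cP^3$, up to an $\cH^2$-null set,
$$\Nor P=\bigsqcup_{k=0}^{2}\;\bigsqcup_{F\in\F_k(P)} F\times\nu(P,F),$$
each piece being $2$-dimensional; since tangent directions from the two factors are orthogonal in $\R^6$, the restriction of $\cH^2$ factors as $\cH^k|_F\otimes \cH^{2-k}|_{\nu(P,F)}$, and hence
$$T_P(\mathbf{1}_\eta\psi^{r,s})=\sum_{k=0}^{2}\sum_{F\in\F_k(P)}\int_F\!\!\int_{\nu(P,F)}\mathbf{1}_\eta(x,u)\,\bigl\langle\psi^{r,s}(x,u),a_P(x,u)\bigr\rangle\,\cH^{2-k}(\D u)\,\cH^k(\D x).$$

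I would next compute the $2$-vector $a_P(x,u)$ on each stratum using the recipe in the excerpt. For a facet ($k=2$) the frame $(b_1,b_2,u)$ has $b_1,b_2\in L(F)$ and $k_1=k_2=0$, so $a_P=(b_1,0)\wedge(b_2,0)$; for a vertex ($k=0$) one has $k_1=k_2=\infty$, so $a_P=(0,b_1)\wedge(0,b_2)$. In the facet case $\Pi_2 a_1=\Pi_2 a_2=0$, and in the vertex case $\Pi_1 a_1=\Pi_1 a_2=0$; in either case the alternating sum $\sum_\sigma\sgn(\sigma)\langle v_{r+s+1},\Pi_1\xi_{\sigma(1)}\rangle\langle v_{r+s+2},\Pi_2\xi_{\sigma(2)}\rangle$ appearing in $\widetilde\psi^{r,s}$ vanishes identically, so $\psi^{r,s}$ evaluates to zero after symmetrization. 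Only edges contribute.

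For an edge $F\in\F_1(P)$ with direction $v_F$, the prescription given in the excerpt ($b_1=v_F$ with $k_1=0$ and $b_2=u\times v_F$ with $k_2=\infty$) yields $a_P=(v_F,0)\wedge(0,u\times v_F)$. Substituting into $\widetilde\psi^{r,s}$, only the identity permutation $\sigma$ survives, and one obtains
$$\widetilde\psi^{r,s}(x,u;\mathbf{v};a_P)=-x^r(v_1,\ldots,v_r)\,u^s(v_{r+1},\ldots,v_{r+s})\,\langle v_{r+s+1},v_F\rangle\,\langle v_{r+s+2},u\times v_F\rangle.$$
This is the multilinear form corresponding to the rank-$(r+s+2)$ tensor $-x^{\otimes r}\otimes u^{\otimes s}\otimes v_F\otimes(u\times v_F)$, and symmetrization over $\mathcal{S}(r+s+2)$ turns it into the symmetric tensor $-x^r u^s v_F(u\times v_F)=x^r u^s v_F(v_F\times u)$. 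Inserting this into the face decomposition and comparing with (\ref{2.2}) at $j=0$ (where $v_F^{2j+1}=v_F$) yields the claimed identity.

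The main obstacle lies in the sign and orientation bookkeeping: one must verify that the answer is independent of the ambiguous choice $v_F\leftrightarrow -v_F$ (it is, because both $a_P$ and the integrand of $\widetilde\phi^{r,s,0}$ contain $v_F$ an even number of times once the cross product is counted), that $b_2=u\times v_F$ really makes $(b_1,b_2,u)$ positively oriented, and that the symmetrization step transports the alternating $2$-form value faithfully to the stated symmetric tensor without extraneous combinatorial factors.
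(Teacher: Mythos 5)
Your proposal is correct and follows essentially the same approach as the paper's proof: decompose $\Nor P$ into the strata $F\times\nu(P,F)$ over faces of each dimension, compute the orientation $2$-vector $a_P$ on each stratum from the curvatures $k_1,k_2$, observe that on facets ($k_1=k_2=0$) and vertices ($k_1=k_2=\infty$) one of the two factors $\Pi_1 a_\cdot$ or $\Pi_2 a_\cdot$ vanishes so the alternating pairing is zero, evaluate on edges using $b_1=v_F$, $b_2=u\times v_F$ with the sign flip $-(u\times v_F)=v_F\times u$, and then symmetrize. The orientation and sign issues you flag at the end are exactly the ones that need checking and are resolvable as you indicate (for instance $\det(v_F,\,u\times v_F,\,u)=1>0$, both $a_P$ and the integrand of $\widetilde\phi^{r,s,0}$ are invariant under $v_F\mapsto -v_F$, and the $\tfrac{1}{(r+s+2)!}$ prefactor in the definition of $\psi^{r,s}$ matches the normalization of the symmetric tensor product), so there is no gap.
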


\begin{proof}
For a given polytope $P\in\mathcal{P}^3$ and $\eta\in\mathcal{B}(\Sigma^3)$, we show that 
\begin{eqnarray*}
T_P\left(\mathbf{1}_\eta\widetilde\psi^{r,s}(\cdot;\mathbf{v};\cdot)\right)&=&\sum_{F\in\mathcal{F}_1(P)}\int_{\eta\cap (F\times\nu(P,F))}
x^r(v_1,\ldots,v_r)u^s(v_{r+1},\ldots,v_{r+s})\\
&&\times \langle v_F,v_{r+s+1}\rangle\langle v_{r+s+2},v_F\times u\rangle\, \mathcal{H}^2(\D(x,u))
\end{eqnarray*}
for all $\mathbf{v}=(v_1,\ldots,v_{r+s+2})\in(\R^3)^{r+s+2}$. Subsequent symmetrization then yields the assertion of the lemma. 

Using the disjoint decomposition
$$
\eta\cap \Nor P=\bigcup_{j=0}^{2}\bigcup_{F\in\mathcal{F}_j(P)} \eta\cap(\textrm{relint }F\times\nu(P,F)),
$$
we obtain
\begin{eqnarray*}
 T_P\left(\mathbf{1}_{\eta} \widetilde{\psi}^{r,s}(\cdot\,;\mathbf{v};\cdot)\right)
&  =&\int_{\eta\cap\Nor P} \left\langle a_P(x,u), \widetilde{\psi}^{r,s}(x,u;\mathbf{v};\cdot)\right\rangle \, \mathcal{H}^{2}(\D (x,u)) \\
&=&-\sum_{j=0}^2 \sum_{F\in\mathcal{F}_j(P)}\int_{\eta\cap(F\times \nu(P,F)) }x^r(v_1,\ldots,v_r)u^s(v_{r+1},\ldots,v_{r+s})\\
&&\times \sum_{\sigma \in {\mathcal S}(2)}\sgn(\sigma)\left\langle v_{r+s+1},\frac{1}{\sqrt{1+k_{\sigma(1)}(x,u)^2}}\,b_{\sigma(1)}(x,u)\right\rangle \\
&&\times \left\langle v_{r+s+2},\frac{k_{\sigma(2)}(x,u)}{\sqrt{1+k_{\sigma(2)}(x,u)^2}}\,b_{\sigma(2)}(x,u)\right\rangle 
 \mathcal{H}^{2}(\D (x,u))\\
&=& - \sum_{F\in\mathcal{F}_1(P)}\int_{\eta\cap(F\times \nu(P,F)) }x^r(v_1,\ldots,v_r)u^s(v_{r+1},\ldots,v_{r+s})\\
&&\times \left\langle v_{r+s+1},1\cdot b_{1}(x,u)\right\rangle \left\langle v_{r+s+2},1\cdot b_{2}(x,u)\right\rangle 
 \mathcal{H}^{2}(\D (x,u))\\
&=&\sum_{F\in\mathcal{F}_1(P)}\int_{\eta\cap(F\times \nu(P,F)) }x^r(v_1,\ldots,v_r)u^s(v_{r+1},\ldots,v_{r+s})\\
&&\times 
\langle v_{r+s+1},v_F \rangle\langle  v_{r+s+2}, -u\times v_F \rangle\, \mathcal{H}^2(\D(x,u)),
\end{eqnarray*}
where we used that for $F\in\mathcal{F}_j(P)$ and $\mathcal{H}^2$ almost all $(x,u)\in F\times \nu(P,F)$, we have 
$k_1(x,u)=k_2(x,u))=\infty$ if $j=0$ and $k_1(x,u)=k_2(x,u))=0$ if $j=2$; moreover, for $j=1$, we can choose $b_1(x,u)=v_F$ with 
$k_1(x,u)=0$ and $b_2(x,u)=u\times b_1(x,u)$ with $k_2(x,u)=\infty$. This completes the arguments.
\end{proof}

Combining Lemma \ref{lem3.1}, \cite[Lem. 4.2]{HS14}, the argument on page 1542 in \cite{HS14} and in the proof 
of Theorem 4.1 of \cite{HS14}, we obtain the following result. 

\begin{theorem}\label{theorem 3.1}
The map $\mathcal{K}^3\times\mathcal{B}(\Sigma^3)\to\T^{r+s+2}$, $K\mapsto T_K\left(\mathbf{1}_\eta\psi^{r,s}\right)$,  is the weakly 
continuous extension of the map $\mathcal{P}^3\times\mathcal{B}(\Sigma^3)\to\T^{r+s+2}$, $(P,\eta)\mapsto\widetilde\phi^{r,s,0}(P,\eta)$. 
The extension is a tensor-valued measure which is a translation covariant and ${\rm SO}(3)$ covariant, locally defined and weakly continuous 
valuation. 
\end{theorem}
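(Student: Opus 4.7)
The plan is to chain together three ingredients highlighted in the statement itself: Lemma~\ref{lem3.1} for consistency on polytopes, \cite[Lem.~4.2]{HS14} for the weak continuity of the normal-cycle map $K\mapsto T_K$ against smooth compactly supported forms, and the proof scheme of \cite[Thm.~4.1]{HS14} for assembling these into a weakly continuous valuation on $\mathcal{K}^3$. Writing $\Phi(K,\eta):=T_K(\mathbf{1}_\eta\psi^{r,s})$, I would first check that $\Phi(K,\cdot)$ is a $\T^{r+s+2}$-valued signed measure of finite total variation on $\Sigma^3$; this is routine because $\Nor K$ has finite $\mathcal{H}^2$-measure on bounded sets, $\psi^{r,s}$ is smooth, and the polynomial growth in $x$ is tamed by restricting attention to a ball containing $K$.

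For weak continuity in $K$, take $K_i\to K$ in $\mathcal{K}^3$ and a continuous $f\colon\Sigma^3\to\R$. All $K_i$ eventually lie in a common ball, so only the restriction of $\psi^{r,s}$ to that ball matters. After multiplying by a smooth cutoff and approximating $f$ uniformly by smooth functions, \cite[Lem.~4.2]{HS14} gives $T_{K_i}(f\psi^{r,s})\to T_K(f\psi^{r,s})$, which is the required convergence of $\Phi(K_i,\cdot)$ to $\Phi(K,\cdot)$ in the weak sense. Together with Lemma~\ref{lem3.1}, this identifies $\Phi$ on $\cP^3$ with $\widetilde\phi^{r,s,0}$ and, by density of polytopes in $\mathcal{K}^3$, fixes the extension uniquely.

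The remaining structural properties transfer from corresponding properties of $T_K$ and the explicit form of $\psi^{r,s}$: the valuation identity from the additivity $T_{K\cup M}+T_{K\cap M}=T_K+T_M$ of the normal cycle; local definition from the concentration of $T_K$ on $\Nor K$, so that $\eta\cap\Nor K=\eta'\cap\Nor K'$ forces equality of the integrals; translation covariance of degree~$r$ from the push-forward behaviour of $T_K$ under translations combined with the polynomial dependence of $\psi^{r,s}$ on~$x$; and $\4$-covariance from the joint equivariance of $T_K$ and of $\psi^{r,s}$ under proper rotations. The use of the positively oriented frame $(b_1,b_2,u)$ in the definition of $a_K$ is precisely what makes $\psi^{r,s}$ $\4$-equivariant but not $\3$-equivariant.

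The step I expect to require the most care is the weak-continuity argument, because one must verify that the concrete form $\psi^{r,s}$—with its polynomial factor $x^r$, its coefficients that degenerate near the singular part of $\Nor K$ (where some $k_i=\infty$), and its symmetrization over $(r+s+2)!$ permutations of the vector arguments—really is of the type covered by \cite[Lem.~4.2]{HS14}. The polynomial factor is handled by a ball cutoff, the symmetrization is a fixed linear operation that commutes with the limit, and the apparent singularities cancel because of the explicit pairing of $\psi^{r,s}$ with the two summands of $a_K$ via $\Pi_1$ and $\Pi_2$; this is precisely why $\psi^{r,s}$ was designed as it was in Section~\ref{sec3}, and it is in the same vein as the forms treated in \cite[Sec.~4]{HS14}.
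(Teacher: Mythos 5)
Your proposal is correct and follows exactly the paper's route: the paper's ``proof'' of Theorem~\ref{theorem 3.1} is precisely the one-sentence combination of Lemma~\ref{lem3.1}, \cite[Lem.~4.2]{HS14}, and the argument on p.~1542 and in the proof of \cite[Thm.~4.1]{HS14}, which you have unpacked ingredient by ingredient. One small clarification to a side remark: the form $\psi^{r,s}$ is in fact $\3$-equivariant as a differential form; what breaks $\3$-covariance of $K\mapsto T_K(\mathbf{1}_\eta\psi^{r,s})$ is that the orienting $2$-vector $a_K$ picks up a factor $\det\vartheta$ under $\vartheta\in\3$, and, unlike the forms producing the $\phi_k^{r,s,j}$, $\psi^{r,s}$ carries no compensating orientation-sensitive factor to cancel this sign.
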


In the following, we denote the weakly continuous extension of $\widetilde\phi^{r,s,0}$ to all convex bodies again by $\widetilde\phi^{r,s,0}$. 
For general convex bodies, we write 
$$
\mathbb{K}(x,u):=\sqrt{1+k_1(x,u)^2}\sqrt{1+k_2(x,u)^2},
$$
where the dependence on $K$ is not indicated. 
Omitting also the arguments of $k_i,b_i$ and $\mathbb{K}$, we obtain
\begin{align*}
\widetilde\phi^{r,s,0}(K,\eta)&=\int_{\eta\cap\Nor K}-x^r u^s \left[\frac{1}{\sqrt{1+k_1^2}}\,b_1\frac{k_2}{\sqrt{1+k_2^2}}\,b_2-
\frac{1}{\sqrt{1+k_2^2}}\,b_2\frac{k_1}{\sqrt{1+k_1^2}}\,b_1\right]\mathcal{H}^2(\D(x,u))\\
&=\int_{\eta\cap\Nor K}x^r u^s \frac{k_1b_1  
b_2-k_2b_1b_2}{\mathbb{K}}\, \mathcal{H}^2(\D(x,u))\\
&=\int_{\eta\cap\Nor K}x^r u^s
\frac{k_1b_1( u\times 
b_1)+k_2b_2(u\times b_2)}{\mathbb{K}}\, \mathcal{H}^2(\D(x,u))
\end{align*}
or
$$
\widetilde\phi^{r,s,0}(K,\eta)=\int_{\eta\cap\Nor K}x^r u^s
\frac{k_1b_2( b_2\times 
u)+k_2b_1( b_1\times u)}{\mathbb{K}}\, \mathcal{H}^2(\D(x,u)),
$$
where we used that $b_2=u\times b_1$ and $-b_1=u\times b_2$.

\section{The Classification Theorem}\label{sec4}

The following extension of Theorem \ref{Thm2.1} is the main result of this paper. 

\begin{theorem}\label{Thm4.1}
For $p\in\N_0$, let $\widetilde T_p(\Kn)$ denote the real vector space of all mappings $\Gamma:\Kn\times\B(\Sigma^n)\to\T^p$ with the following properties.\\[1mm]
$\rm (a)$ $\Gamma(K,\cdot)$ is a $\T^p$-valued measure, for each $K\in\Kn$,\\[1mm]
$\rm (b)$ $\Gamma$ is translation covariant and ${\rm SO}(n)$ covariant,\\[1mm]
$\rm (c)$ $\Gamma$ is locally defined,\\[1mm]
$\rm (d)$ $\Gamma$ is weakly continuous.

Then a basis of $\widetilde T_p(\Kn)$ is given by the mappings $Q^m\phi^{r,s,j}_k$, where $m,r,s\in\N_0$ and $j\in \{0,1\}$ satisfy $2m+2j+r+s=p$, where $k\in\{0,\dots,n-1\}$, and where $j=0$ if $k\in\{0,n-1\}$, together with\\[1mm]
$\bullet$ if $n\ge 4$, no more mappings,\\[1mm]
$\bullet$ if $n=3$, the mappings $Q^m\widetilde \phi^{r,s,0}$, where $m,r,s\in\N_0$ satisfy $2m+r+s+2=p$,\\[1mm]
$\bullet$ if $n=2$, the mappings $Q^m\widetilde \phi_k^{r,s}$, where $m,r,s\in\N_0$ satisfy $2m+r+s+1=p$ and where \hspace*{6pt}  $k\in\{0,1\}$.
\end{theorem}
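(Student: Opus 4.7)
The plan is to reduce Theorem \ref{Thm4.1} to the polytope classification of Theorem \ref{Thm2.2} and then to discard those generators which have no weakly continuous extension to $\Kn$. Given $\Gamma\in \widetilde T_p(\Kn)$, its restriction to $\Pn$ lies in $\widetilde T_p(\Pn)$, so Theorem \ref{Thm2.2} supplies a unique representation of $\Gamma|_{\Pn}$ as a finite linear combination of the generators $Q^m\phi_k^{r,s,j}$ (for all $j\in\N_0$), together with, in dimension $n=3$, the $Q^m\widetilde\phi^{r,s,j}$ for all $j\in\N_0$, and, in dimension $n=2$, the $Q^m\widetilde\phi_k^{r,s}$. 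Of these, the generators allowed in Theorem \ref{Thm4.1} all admit weakly continuous extensions to $\Kn$: this is known from \cite{HS14} for $Q^m\phi_k^{r,s,j}$ with $j\in\{0,1\}$, it follows from Theorem \ref{theorem 3.1} for the $n=3$ generators with $j=0$, and from (\ref{B}) for the $n=2$ generators. It remains to show that the coefficients of all other (forbidden) generators must vanish, and then to verify linear independence on $\Kn$.

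To eliminate the forbidden coefficients, I subtract from $\Gamma$ the weakly continuous extensions of all extendable terms. The difference $\Gamma'$ is again a weakly continuous, locally defined, translation and $\SO$ covariant, tensor-valued measure valuation on $\Kn$, and its restriction to $\Pn$ is a linear combination only of $Q^m\phi_k^{r,s,j}$ with $j\geq 2$ and $1\leq k\leq n-2$, and, when $n=3$, additionally $Q^m\widetilde\phi^{r,s,j}$ with $j\geq 1$. For the first family, the coefficients vanish by the same construction as in the proof of \cite[Thm.~4.1]{HS14}, which exhibits sequences of polytopes $P_i\to K$ ($K$ smooth and strictly convex) and test functions against which the non-extendable terms produce divergent or discontinuous contributions; the $\SO$ versus $\rO$ distinction plays no role here, since these generators are already $\rO$ covariant. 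The main novelty, and the main obstacle, is to prove the analogous statement for $Q^m\widetilde\phi^{r,s,j}$ with $j\geq 1$ in $n=3$. These generators carry, on each edge $F$ of the polytope, the tensor factor $v_F^{2j+1}\odot(v_F\times u)$, so after symmetrization with $x^ru^s$ the unit-edge-direction appears to the odd power $2j+1$, a quantity that cannot arise as the integral over the normal cycle of any smooth tensor-valued differential form of the type built in Section \ref{sec3}. I would approximate a suitably chosen smooth strictly convex body $K\in\mathcal{K}^3$ (e.g.\ a Euclidean ball) by inscribed polytopes $P_i\to K$ whose edges distribute asymptotically uniformly, evaluate $\Gamma'(P_i,\cdot)$ against smooth test functions on $\Sigma^3$ polynomial in $u$, and separate contributions by rank and by degree of homogeneity in $x$ and $u$. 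This produces a triangular system of relations indexed by $(m,r,s,j)$, and the fact that the $v_F^{2j+1}$ factor for $j\geq 1$ has no weakly continuous counterpart on $\Kn$ forces the corresponding coefficients to vanish one by one.

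Once all forbidden coefficients are eliminated, $\Gamma|_{\Pn}$ equals the restriction of a finite linear combination of the generators listed in Theorem \ref{Thm4.1}, each of which has a (unique) weakly continuous extension to $\Kn$. Since $\Pn$ is dense in $\Kn$ in the Hausdorff metric and both $\Gamma$ and the sum of the extensions are weakly continuous, the equality extends to all of $\Kn$. Linear independence of the proposed basis on $\Kn$ follows directly from the linear independence on $\Pn$ that is part of Theorem \ref{Thm2.2}, completing the proof.
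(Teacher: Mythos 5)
Your high-level reduction is correct and matches the paper's strategy for the easy parts: restrict to polytopes, invoke Theorem~\ref{Thm2.2}, subtract the generators that extend, and argue that the remaining coefficients must vanish. However, the heart of the argument --- why the residual $\Gamma'$ must be identically zero on ${\mathcal K}^3$ --- is where your proposal has a genuine gap.

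First, you propose to eliminate the coefficients of $Q^m\phi_k^{r,s,j}$ with $j\ge 2$ ``by the same construction as in \cite[Thm.~4.1]{HS14}'' and to treat the coefficients of $Q^m\widetilde\phi^{r,s,j}$ with $j\ge 1$ by a separate argument. This two-stage separation does not work. Once all extendable generators have been subtracted, the residual $\Gamma'$ on polytopes is a \emph{single} linear combination mixing both the ${\rm O}(3)$-covariant non-extendable terms ($c_{mjs}Q^m\phi_1^{0,s,j}$, $j\ge 2$) and the new ${\rm SO}(3)$-only terms ($a_{mjs}Q^m\widetilde\phi^{0,s,j}$, $j\ge 1$). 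One cannot conclude from the $\rO$-covariant case of \cite{HS14} that the $c$-coefficients vanish on their own, because $\Gamma'$ is no longer $\rO$-covariant and the leading-order asymptotics of the two families can in principle cancel. The actual proof (Lemma~\ref{Lem1} and its five cases, in particular Case 5 where $s_0=t_0$) has to analyze the limit tensor $\Upsilon_3=\Upsilon_1+\Upsilon_2$ and show that this \emph{sum} is not ${\rm SO}(3,e_3)$-invariant; this requires a careful degree comparison between the contributions of the two families (Cases (i)--(iii) at the end of Section~\ref{sec7}) and is not a formal consequence of the two individual non-invariance statements.

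Second, your proposed mechanism for eliminating the $\widetilde\phi^{r,s,j}$, $j\ge 1$, terms --- approximating a ball by inscribed polytopes with ``asymptotically uniformly distributed edges'' and extracting a ``triangular system of relations'' --- does not contain the key idea that actually makes the argument go through. The paper uses polytopes $P^N_{h,t}$ with an exact discrete rotational symmetry $\vartheta_N$ converging to a body of revolution $K_h$, and the contradiction comes from comparing $\Gamma'(P^N_{h,t},f)$ evaluated on arguments $E_i'$ and $\vartheta E_i'$ for a rotation $\vartheta\in{\rm SO}(3,e_3)$. The crucial and non-obvious point is the choice of $N$ \emph{depending on the highest indices} $d$ (resp.\ $b$) for which $c_j\ne 0$ (resp.\ $a_j\ne 0$): one takes $N=2$ when the relevant index is even/odd (according to the case) and $N=d$ or $N=b+1$ otherwise, so that the trigonometric sums $\sum_{r=0}^{N-1}e^{{\rm i}\cdot 2dr\beta_N}$ or $\sum_{r=0}^{N-1}e^{{\rm i}\cdot 2(b+1)r\beta_N}$ do not vanish. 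Establishing that the resulting tensor $\Upsilon_\nu$ is not ${\rm SO}(3,e_3)$-invariant then proceeds through an analytic continuation argument (writing $F_\nu(\lambda)=P(\lambda)+\sqrt{1-\lambda^2}\,Q(\lambda)$ and extracting leading coefficients as $\lambda\to\infty$). Your heuristic that ``$v_F^{2j+1}$ for $j\ge 1$ cannot arise from a smooth form on the normal cycle'' is plausible but is not a proof, and a uniform-distribution-of-edges limit would in fact produce an ${\rm SO}(3,e_3)$-invariant average, washing out exactly the non-invariance that one needs to detect. Finally, you also omit the reduction from translation covariance to translation invariance and then to homogeneity degree one, which is what allows the paper to restrict attention to $\phi_1^{0,s,j}$ and $\widetilde\phi^{0,s,j}$ with $r=0$ in the first place; without this reduction the case analysis would be considerably more cluttered.
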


For $n\not= 3$, this follows from the previous results. In fact, suppose that $\Gamma:\Kn\times\B(\Sigma^n)\to\T^p$ satisfies (a)--(d). First let $n\ge 4$. Then Theorem \ref{Thm2.2} tells us that the restriction of $\Gamma$ to $\Pn\times\B(\Sigma^n)$ is a linear combination of certain mappings $Q^m\phi_k^{r,s,j}$, restricted to $\Pn\times\B(\Sigma^n)$. Since on $\Kn\times\B(\Sigma^n)$ the mappings $Q^m\phi_k^{r,s,j}$, as well as $\Gamma$, are weakly continuous, the linear combination extends to general convex bodies. If $n=2$, then it follows from Theorem \ref{Thm2.2} that the restriction of $\Gamma$ to $\mathcal{P}^2\times\B(\Sigma^2)$ is a linear combination of certain mappings $Q^m\phi_k^{r,s,j}$ and certain mappings $Q^m\widetilde\phi_k^{r,s}$, restricted to $\mathcal{P}^2\times\B(\Sigma^2)$. Again by weak continuity, this linear combination extends to ${\mathcal K}^2$. The linear independence
holds already for the restrictions.

Thus, it remains to prove Theorem \ref{Thm4.1} for $n=3$. By an argument already used in \cite[pp. 1534--1335]{HS14} (and before that by Alesker \cite{Ale99b}), it is sufficient to prove the assertion only for the case where $\Gamma$ is translation invariant. We sketch the general idea of this reduction, because it will be used more than once. It is based on relation (\ref{2.1}), where, as it follows from \cite[Lem. 3.1]{HS14}, the mapping $\Gamma_{p-q}$ is translation invariant. In the cases considered, this is sufficient 
to identify $\Gamma_{p-q}$, in the way that an explicit mapping $\Delta:\Kn\times\B(\Sigma^n)$ can be found such that
$$ \Delta(K+t,\eta+t) = \sum_{j=0}^q \Delta_{p-j}(K,\eta)\frac{t^j}{j!}$$
with tensors $\Delta_{p-j}(K,\eta)\in \T^{p-j}$, and where $\Delta_{p-q}=\Gamma_{p-q}$. The mapping $\Gamma':=\Gamma-\Delta$ then has properties analogous to those of $\Gamma$ and satisfies
$$\Gamma'(K+t,\eta+t) =\sum_{j=0}^{q-1}\Gamma'_{p-j}(K,\eta)\frac{t^j}{j!} \qquad (\mbox{with }\Gamma'_{p-j}=\Gamma_{p-j}-\Delta_{p-j}).$$
Now $\Gamma'_{p-q+1}$ is translation invariant, and the procedure can be repeated. After finitely many steps, one ends up with an explicit representation of $\Gamma$.

Applying this argument in the present situation, we have to use that
$$ \widetilde\phi^{r,s,j}(P+t,\eta+t) =\sum_{i=0}^r \widetilde\phi^{r-i,s,j}(P,\eta)\binom{r}{i}t^i$$
for $P\in \cP^3$, $\eta\in\B(\Sigma^3)$, $t\in\R^3$ and 
$$ \widetilde\phi^{r,s}_k(P+t,\eta+t) =\sum_{i=0}^r \widetilde\phi^{r-i,s}_k(P,\eta)\binom{r}{i}t^i$$
for $P\in \cP^2$, $\eta\in\B(\Sigma^2)$, $t\in \R^2$.

If now translation invariance of $\Gamma$ is assumed, then it follows from Theorem \ref{Thm2.2} (where only mappings $\phi_k^{r,s,j}$ and $\widetilde \phi_k^{r,s,0}$ with $r=0$ appear) that the restriction of $\Gamma$ to ${\mathcal P}^3\times\B(\Sigma^3)$ is a sum of mappings with the same properties which are homogeneous of one of the degrees $0,1,2$. Therefore, it is sufficient to prove Theorem \ref{Thm4.1} under the additional assumption that $\Gamma$ is homogeneous of degree $k$, for some $k\in\{0,1,2\}$. Since the mappings $\widetilde\phi^{r,s,j}$ in Theorem \ref{Thm2.2} appear only for homogeneity degree one (and linear independence has been proved in \cite{HS16}), it is finally clear that in order to complete the proof of Theorem \ref{Thm4.1}, we only have to prove the following result.

\begin{theorem}\label{Thm4.2}
Let $p\in \N_0$. Let $\Gamma: {\mathcal K}^3\times \B(\Sigma^3)\to \T^p$ be a mapping with the following properties.\\[1mm]
$\rm (a)$ \em $\Gamma(K,\cdot)$ is a $\T^p$-valued measure, for each $K\in {\mathcal K}^3$,\\[1mm]
$\rm (b)$ \em $\Gamma$ is translation invariant and ${\rm SO}(3)$ covariant,\\[1mm]
$\rm (c)$ \em $\Gamma$ is locally defined,\\[1mm]
$\rm (d)$ \em $\Gamma$ is weakly continuous,\\[1mm]
$\rm (e)$ \em $\Gamma$ is homogeneous of degree $1$.\\[1mm]
Then $\Gamma$ is a linear combination, with constant coefficients, of the mappings $Q^m\phi_1^{0,s,j}$, where $m,s\in\N_0$ and $j\in\{0,1\}$ satisfy $2m+2j+s=p$, and of the mappings $Q^m\widetilde\phi^{0,s,0}$, where $m,s\in\N_0$ satisfy $2m+s+2=p$.
\end{theorem}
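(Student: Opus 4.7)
My plan is to restrict $\Gamma$ to $\cP^3$, apply Theorem~\ref{Thm2.2} to obtain a polytope-level representation, subtract off those summands that already possess weakly continuous extensions to $\Kn$, and then argue that the remaining ``non-extendable'' part must vanish. Translation invariance of $\Gamma$ forces $r=0$ in every generator, and homogeneity of degree~$1$ restricts the classical generators to $\phi_1^{0,s,j}$ with $k=1$ (while $\widetilde\phi^{0,s,j}$ is already $1$-homogeneous in $n=3$). Consequently, there are unique scalars $c_{m,s,j}$, $d_{m,s,j}$ such that
\begin{equation*}
\Gamma|_{\cP^3} = \sum_{\substack{j\ge 0\\ 2m+2j+s=p}} c_{m,s,j}\, Q^m\phi_1^{0,s,j} + \sum_{\substack{j\ge 0\\ 2m+2j+s+2=p}} d_{m,s,j}\, Q^m\widetilde\phi^{0,s,j}.
\end{equation*}

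By Theorem~4.1 of \cite{HS14}, the summands with $j\in\{0,1\}$ in the first sum extend weakly continuously to $\Kn$; by Theorem~\ref{theorem 3.1}, the summands with $j=0$ in the second sum do so as well. Subtracting these extensions produces a mapping $\Gamma^\ast$ that still satisfies (a)--(e), whose polytope restriction is a linear combination only of the ``bad'' generators $Q^m\phi_1^{0,s,j}$ with $j\ge 2$ and $Q^m\widetilde\phi^{0,s,j}$ with $j\ge 1$. Since $\Gamma^\ast$ is weakly continuous and polytopes are dense in $\Kn$, it will then suffice to show that all the coefficients $c_{m,s,j}$ (for $j\ge 2$) and $d_{m,s,j}$ (for $j\ge 1$) vanish.

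To prove this I would follow the template of \cite[Secs.~5--7]{HS14}: choose a smooth test body $K$ (such as a circular cylinder or a suitably chosen body of revolution) and approximate it by explicit polytopes $P_i\to K$ whose edge structure can be computed in closed form, then evaluate $\int f\,\D\Gamma^\ast(P_i,\cdot)$ against continuous test functions $f:\Sigma^3\to\R$ selected to resolve the tensor factors $Q_{L(F)}^j$ arising from $\phi_1^{0,s,j}$ via (\ref{4.1}) and $v_F^{2j+1}(v_F\times u)$ arising from $\widetilde\phi^{0,s,j}$ via (\ref{2.2}). As $i\to\infty$ these edge factors scale with the opening angle of each edge in ways that differ between the two families and across the parameter~$j$, so suitable choices of $f$ should separate the coefficients by a descending induction on $j$. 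The main obstacle lies in executing this in the presence of both families simultaneously: the classical $\phi_1^{0,s,j}$ involve only symmetric tensor powers of $v_F$ and are covariant under the full orthogonal group, whereas $\widetilde\phi^{0,s,j}$ carries a cross-product factor and is only $\4$-covariant. A given approximating sequence activates both families at once, and the challenge is to select test bodies and test functions---likely exploiting the positively oriented frame $(b_1,b_2,u)$ from the proof of Lemma~\ref{lem3.1}---that pair nontrivially with the cross-product factor while remaining distinguishable from the contributions of the symmetric tensor $Q_{L(F)}^j$.
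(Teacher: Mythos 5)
Your setup matches the paper's exactly: apply Theorem~\ref{Thm2.2} on polytopes, subtract off the summands with $j\le 1$ (classical) and $j=0$ (tilde) which are known to extend weakly continuously, and aim to show the residual mapping $\Gamma^\ast$ vanishes. You also correctly identify that translation invariance forces $r=0$ and that homogeneity degree~$1$ restricts $k=1$. So far this is precisely what the paper does via equations~(\ref{4.3}) and~(\ref{1.2}).

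However, you stop short of the actual argument; what you present as a plan (``follow the template of \cite[Secs.~5--7]{HS14}'') and as the ``main obstacle'' (separating the $Q_{L(F)}^j$ contributions from the $v_F^{2j+1}(v_F\times u)$ contributions when both families are active at once) is exactly the hard core of the proof, and you do not indicate how to resolve it. The paper's resolution relies on several specific devices that your proposal does not anticipate. First, the approximating polytopes $P^N_{h,t}$ are not built from a single fixed sequence: they carry a free parameter $N\in\{2,3,5,7,\dots\}$ controlling their rotational symmetry (the square tiling for $N=2$, and $N$-fold symmetric triangular tilings for odd $N$), and the edge directions $\ell_r=(\cos r\beta_N)e_1+(\sin r\beta_N)e_2$ enter through the assumptions $A(\varepsilon),B(\varepsilon),C(\varepsilon)$. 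Second, the contradiction is not obtained by choosing a clever scalar test function $f$ to isolate individual coefficients (the paper's $f$ is a fixed rotation-invariant bump supported near $-e_3$); it is obtained by applying the resulting tensor to arguments $E_i=(a,\dots,a,-e_3,\dots,-e_3)$ and then varying $a\in\mathbb{S}^1\subset\R^2$, reducing everything to the scalar functions $F_1(\lambda)$ and $F_2(\lambda)$ of $\lambda=\langle a,e_1\rangle$ given in~(\ref{6.23})--(\ref{6.24}). Third, and decisively, the cancellation between the two families is ruled out by writing $F_i(\lambda)=P_i(\lambda)+\sqrt{1-\lambda^2}\,Q_i(\lambda)$, analytically continuing past $\lambda=1$, and observing that the leading terms carry factors $\sum_{r=0}^{N-1}e^{{\rm i}\cdot 2dr\beta_N}$ and $\sum_{r=0}^{N-1}{\rm i}\,e^{{\rm i}\cdot 2(b+1)r\beta_N}$; only here does one choose $N$ in dependence on the parities of the top indices $d$ and $b$ (and on whether $d>b+1$, $d<b+1$, or $d=b+1$) to make these sums nonzero without mutual cancellation. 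None of this appears in your proposal, so the proof is genuinely incomplete: the reduction is right, but the step that does the work---and the only step that is new relative to \cite{HS14}---is left as an acknowledged difficulty rather than executed.
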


To begin with the proof, let $\Gamma$ be a mapping satisfying the assumptions (a)--(e) of Theorem \ref{Thm4.2}. By Theorem \ref{Thm2.2}, on polytopes $P$ the mapping $\Gamma$ is of the form
$$ \Gamma(P,\cdot) =\sum_{m,j,s\ge 0\atop 2m+2j+s=p} c_{mjs}Q^m\phi_1^{0,s,j}(P,\cdot) + \sum_{m,j,s\ge 0\atop 2m+2j+s+2=p} a_{mjs}Q^m \widetilde \phi^{0,s,j}(P,\cdot).$$
Since $\phi_1^{0,s,0}$, $\phi_1^{0,s,1}$ and $\widetilde \phi^{0,s,0}$ are defined on ${\mathcal K}^3$ and are weakly continuous, the mapping
\begin{equation}\label{4.3} 
\Gamma' := \Gamma- \sum_{m,j,s\ge 0,\, j\le 1\atop 2m+2j+s=p}  c_{mjs}Q^m\phi_1^{0,s,j} - \sum_{m,s\ge 0\atop 2m+s+2=p} a_{m0s}Q^m \widetilde \phi^{0,s,0}
\end{equation}
has again properties (a)--(e) of Theorem \ref{Thm4.2}. 

Thus, given a mapping $\Gamma': {\mathcal K}^3\times \B(\Sigma^3)\to \T^p$ which has  properties (a)--(e) and which on polytopes $P$ is of the form
$$ \Gamma'(P,\cdot) =\sum_{m,j,s\ge 0,\,j\ge 2\atop 2m+2j+s=p} c_{mjs}Q^m\phi_1^{0,s,j}(P,\cdot) + \sum_{m,s\ge 0,\, j\ge 1\atop 2m+2j+s+2=p} a_{mjs}Q^m \widetilde \phi^{0,s,j}(P,\cdot),$$
we have to show that here all coefficients $c_{mjs},a_{mjs}$ are zero. The principal idea to prove this is similar to the one in \cite{HS14}: if not all coefficients are zero, then weak continuity and ${\rm SO}(3)$-covariance finally lead to a contradiction. The details are partially more subtle. For the proof, we first construct a sequence of polytopes that converges to a convex body $K$ of revolution. If $\Gamma'$ is not identically zero, then it can finally be shown that $\Gamma'$ is not covariant under all proper rotations mapping $K$ into itself, which is the desired contradiction.

\section{The Approximating Polytopes}\label{sec5}

We construct polytopes $P^N_{h,t}$, where $N$ is either $2$ or an odd integer $\ge 3$, and where $h,t>0$. Two variants of this construction are described in \cite{HS14}, pp. 1550--1551 and  pp. 1558--1559. We briefly recall the definition in the special case $n=3$ needed here.

We choose an orthonormal basis $(e_1,e_2,e_3)$ of ${\mathbb R}^3$ and identify the subspace spanned by $e_1,e_2$ with ${\mathbb R}^2$. We denote by ${\rm SO}(3,e_3) \subset {\rm SO}(3)$ the subgroup of all rotations fixing $e_3$.

Starting point is a tessellation of ${\mathbb R}^2$ into squares or triangles. Together with all their faces, the polygons of the tessellation form a polygonal complex, which we denote by ${\mathcal C}$. For $t>0$, we denote by $t{\mathcal C}$ the complex obtained from ${\mathcal C}$ by dilatation with the factor $t$. 

With the lifting map $L: {\mathbb R}^2\to{\mathbb R}^3$ defined by
$$ L(x):= x+\|x\|^2e_3,\quad x\in{\mathbb R}^2,$$
we define the polyhedral set 
$$ R(t{\mathcal C}):={\rm conv}\,L({\rm vert}\,t{\mathcal C}),$$
where ${\rm vert}\,t{\mathcal C}$ denotes the set of vertices of the complex $t{\mathcal C}$, and the convex set
$$ K:= {\rm conv}\,L({\mathbb R}^2),$$
which is bounded by a paraboloid of revolution. 

Let $\Pi:{\mathbb R}^3\to {\mathbb R}^2$ denote the orthogonal projection. It is a well-known fact (for references, see \cite[pp. 1550--1551]{HS14}) that each face $F$ of $R(t{\mathcal C})$ {\em lies above} a face $G$ of $t{\mathcal C}$ of the same dimension, in the sense that $\Pi F=G$. We write $G= F^\Box$.

We describe the special complexes ${\mathcal C}$ that we use. For $N=2$, it is the complex ${\mathcal C}_2$ of unit squares and their faces, defined by the vertex set $ \{(m_1,m_2,0) \in{\mathbb R}^3: m_1,m_2\in{\mathbb Z}\}$. We point out the trivial (but crucial) fact that each edge of ${\mathcal C}_2$ is parallel to one of the vectors $e_1,e_2$. For the sake of uniformity with later notation, we write $\ell_1=e_1$ and $\ell_2=e_2$.

Now let $N\ge 3$ be an odd integer. We define $\beta_N:= \pi/N$ and the vectors
$$ z_1=e_1,\qquad z_2=(\cos\beta_N)e_1+(\sin\beta_N)e_2,\qquad z_3=z_2-z_1.$$
The triangle $T$ with vertices $0$, $z_1$ and $z_2$ has angles $\beta_N$ at $0$ and $((N-1)/2)\beta_N$ at $z_1$ and at $z_2$. The lines ${\mathbb R}z_1+mz_2$, ${\mathbb R}z_2+mz_3$, ${\mathbb R}z_3+mz_1$ with $m\in{\mathbb Z}$ tessellate the plane ${\mathbb R}^2$ into triangles which are translates of $T$ or $-T$. Together with their faces, they form the polygonal complex ${\mathcal C}_N$. We define the vectors
\begin{equation}\label{5.1a} 
\ell_r:= (\cos r\beta_N)e_1+(\sin r\beta_N)e_2,\qquad r=0,1,\dots,N-1,
\end{equation}
in ${\mathbb R}^2$ and observe that each edge of the complex ${\mathcal C}_N$ is parallel to one of the vectors $\ell_0,\ell_1,\ell_{(N+1)/2}$. (Here we use that $N\ge 3$ is odd.) 

Now we define convex polytopes. Let $h>0$. We cut the polyhedral set $R(t{\mathcal C}_2)$ by the closed halfspace
$$ H_h^-:=\{y\in {\mathbb R}^3: \langle y,e^3\rangle\le h\}$$
and define
$$ P^2_{h,t}:= R(t{\mathcal C}_2)\cap H_h^-.$$
For an odd integer $N\ge 3$, let $\vartheta_N \in{\rm SO}(3, e_3)$ denote the rotation by the angle $\beta_N$ that fixes $e_3$. Then we define the Minkowski average
$$ P^N_{h,t}:= \frac{1}{N} \sum_{k=0}^{N-1} \vartheta_N^k (R(t{\mathcal C}_N)\cap H_h^-).$$
These polytopes satisfy
\begin{equation}\label{5.1} 
\vartheta_N P^N_{h,t} = P^N_{h,t}.
\end{equation}
This holds also for $N=2$, if $\vartheta_2$ denotes the rotation by the angle $\pi/2$ that fixes $e_3$.

Defining the convex body
\begin{equation}\label{5.2}  
K_h:= K \cap H_h^-,
\end{equation}
we clearly have 
\begin{equation} \label{5.3}
\lim_{t\to 0} P^N_{h,t} =K_h
\end{equation}
in the Hausdorff metric, for all $N\in\{2,3,5,7,\dots\}$.

In the following, the edges of $P^N_{h,t}$ that are edges of $R(t{\mathcal C}_N)$ will play a particular role. As proved in \cite{HS14}, they belong to $N$ different classes, which we denote by
$$ {\mathcal E}_r(P^N_{h,t}):= \{ F\in{\mathcal F}_1(P^N_{h,t}): \Pi F \mbox{ is an edge of $t{\mathcal C}_N$, parallel to } \ell_r\},\quad r=0,\dots,N-1.$$

By $\omega_h$ we denote the set of outer unit normal vectors of the convex body $K_h$ at points in the interior of the halfspace $H^-_{h/2}.$

\section{Satisfying some Assumptions}\label{sec6}

Let $f$ be a continuous real function on ${\mathbb S}^2$ with $0\le f\le 1$ and
$${\rm supp}\,f\subset \omega_h,$$ 
which is not identically zero and is invariant under ${\rm SO}(3,e_3)$. We define the edge set
$$ {\mathcal F}_f(P^N_{h,t}):= \{ F\in{\mathcal F}_1(P^N_{h,t}): \nu(P^N_{h,t},F)\cap{\rm supp}\,f\not=\emptyset\}.$$
For $\varepsilon>0$, we formulate the following assumptions $A(\varepsilon)$, $B(\varepsilon)$, $C(\varepsilon)$  on $f$ and $P^N_{h,t}$.

\vspace{3mm}

\noindent{\bf Assumption $A(\varepsilon)$}: For every $u\in{\rm supp}\,f$,
\begin{equation}\label{6.1}
\langle u,-e_3\rangle >1-\varepsilon
\end{equation}
and
\begin{equation}\label{6.2}
|\langle u,a\rangle|\le\varepsilon \quad\mbox{for }a\in{\mathbb R}^2 \mbox{ with }\|a\|=1.
\end{equation}

\noindent{\bf Assumption $B(\varepsilon)$}:
\begin{equation}\label{6.0} 
{\mathcal F}_f(P^N_{h,t}) \subset \bigcup_{r=0}^{N-1} {\mathcal E}_r(P^N_{h,t}).
\end{equation}

The next assumption uses the vectors $\ell_r$ defined by (\ref{5.1a}). Recall that $v_F$ is one of the unit vectors parallel to the edge $F$. For $F\in {\mathcal E}_r(P^N_{h,t})$, we always have $\langle v_F,\ell_r\rangle\not=0$, and we choose $v_F$ such that $\langle v_F,\ell_r\rangle>0$. We then call $v_F$ the {\em canonical} unit vector for $F$.

\vspace{3mm}

\noindent{\bf Assumption $C(\varepsilon)$}: If $F\in {\mathcal F}_f(P^N_{h,t})\cap {\mathcal E}_r(P^N_{h,t})$ and if $v_F$ is the canonical unit vector for $F$, then
\begin{equation}\label{6.4}
|\langle v_F,a\rangle-\langle \ell_r,a\rangle| \le\varepsilon \quad\mbox{for }a\in{\mathbb R}^2\mbox{ with }\|a\|=1,
\end{equation}
and 
\begin{equation}\label{6.3}
|(v_F \times u)(a) - (\ell_r \times -e_3)(a)| \le\varepsilon\quad\mbox{for }a\in{\mathbb R}^2\mbox{ with }\|a\|=1.
\end{equation}

\vspace{2mm}

\begin{proposition}
For given $\varepsilon>0$, the parameter $h>0$ and a number $\tau>0$ can be chosen such that assumptions $A(\varepsilon), B(\varepsilon), C(\varepsilon)$ are satisfied for $0<t<\tau$.
\end{proposition}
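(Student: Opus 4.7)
The plan is to decouple the three assumptions by their dependence on the parameters. Assumption $A(\varepsilon)$ depends only on the limit body $K_h$, so it is secured by choosing $h = h(\varepsilon)$ small; the polytope-level assumptions $B(\varepsilon)$ and $C(\varepsilon)$ are secured by a subsequent choice of $\tau = \tau(\varepsilon, h)$. For $A(\varepsilon)$, the outer unit normal of $K$ at a paraboloid boundary point $L(x)$ is $u(x) = (2x, -1)/\sqrt{1+4\|x\|^2}$, and for $\|x\|^2 < h/2$ one has
\[
\langle u(x), -e_3\rangle = \frac{1}{\sqrt{1+4\|x\|^2}} > \frac{1}{\sqrt{1+2h}}, \qquad |\langle u(x), a\rangle| \le \frac{2\|x\|}{\sqrt{1+4\|x\|^2}} < \sqrt{2h}
\]
for every unit $a \in \mathbb{R}^2$. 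Hence $\omega_h \supset {\rm supp}\,f$ lies in a neighborhood of $-e_3$ that shrinks with $h$, and I fix $h$ small enough to ensure (\ref{6.1}) and (\ref{6.2}), keeping some margin for use in the $C$-step.

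Turning to $B(\varepsilon)$, since $\vartheta_N$ fixes $H^-_h$ the summands $\vartheta_N^k Q$ of $P^N_{h,t}$ (with $Q := R(t\mathcal{C}_N)\cap H^-_h$) satisfy $\vartheta_N^k Q = \vartheta_N^k R(t\mathcal{C}_N) \cap H^-_h$, and any face of $P^N_{h,t}$ with outer normal $u$ decomposes as the Minkowski sum $\tfrac{1}{N} \sum_k \vartheta_N^k F_{\vartheta_N^{-k} u}(Q)$, where $F_u(X)$ denotes the face with outer normal $u$. A one-dimensional factor $F_{\vartheta_N^{-k}u}(Q)$ is either a type (i) edge of $Q$ --- a lift $L(G)$ of an edge $G$ of $t\mathcal{C}_N$ parallel to some $\ell_m$, $m \in \{0,1,(N+1)/2\}$ --- or a type (ii) edge lying in the plane $z = h$; in the first case the summand $\vartheta_N^k F_{\vartheta_N^{-k}u}(Q)$ is parallel to $\vartheta_N^k \ell_m = \ell_{k+m}$. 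The key step is to exclude type (ii) factors. The normal cone of such an edge is the two-dimensional convex cone spanned by $e_3$ (the outward normal of the top face) and the outward normal $v$ of the adjacent lateral $2$-face of $R(t\mathcal{C}_N)$; on $\mathbb{S}^2$ this cone traces the great-circle arc from $e_3$ to $v$, along which the $e_3$-coordinate decreases monotonically with minimum $v_3$. For small $t$, $v$ is an $O(t)$-perturbation of the paraboloid unit normal at a point with $\|x\|^2 \approx h$, so $v_3 \ge -1/\sqrt{1+4h} + O(t)$; provided $t < \tau$ with $\tau$ of order $h$ this stays strictly above $-1/\sqrt{1+2h}$, and since every $w \in \omega_h$ satisfies $w_3 < -1/\sqrt{1+2h}$, the arc misses ${\rm supp}\,f$. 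Consequently, for $u \in {\rm supp}\,f$ every edge factor is of type (i), and a one-dimensional Minkowski sum of such factors and vertices is an edge of $P^N_{h,t}$ parallel to some $\ell_r$ whose $\Pi$-projection is a scaled translate of a lattice edge; the classification of such edges from \cite{HS14} then places $F$ in some $\mathcal{E}_r$.

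For $C(\varepsilon)$ I write $v_F = \alpha \ell_r + \beta e_3$ with $\alpha > 0$ (canonical choice) and $\alpha^2 + \beta^2 = 1$, which is possible because $\Pi F$ is parallel to $\ell_r$. Any $u$ in the relative interior of $N(P^N_{h,t}, F) \cap {\rm supp}\,f$ satisfies $\langle v_F, u\rangle = 0$; writing $u = -e_3 + \delta$ with $\|\delta\|$ of order $\sqrt{\varepsilon}$ (as extracted from $A(\varepsilon)$), the equation $-\beta + \langle v_F, \delta\rangle = 0$ yields $|\beta| = O(\sqrt{\varepsilon})$, hence $\|v_F - \ell_r\| = O(\sqrt{\varepsilon})$. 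Absorbing the square root by shrinking $h$ further in the first step (so that $A$ is satisfied with parameter of order $\varepsilon^2$) delivers (\ref{6.4}) and (\ref{6.3}) by bilinearity of the scalar and vector products. The main obstacle is the top-edge exclusion in $B(\varepsilon)$: for small $h$ the normal cone of a type (ii) edge sweeps nearly a full meridian of $\mathbb{S}^2$, and one must verify that its minimal $e_3$-coordinate stays strictly above the maximal $e_3$-coordinate on $\omega_h$, with the crisp inequality $-1/\sqrt{1+4h} > -1/\sqrt{1+2h}$ providing the needed gap; a quantitative control of the $O(t)$-deviation of the discrete normal $v$ from its paraboloid limit is what ultimately pins down $\tau$.
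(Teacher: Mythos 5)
Your argument reaches the same conclusion but via a genuinely different route in the critical step, namely assumption $B(\varepsilon)$. The paper's proof is short and metric: it observes that every supporting plane $H(K,u)$ with $u\in\omega_h$ stays at distance $\ge\delta>0$ from $K_h\cap H_h$, uses Hausdorff convergence $P^N_{h,t}\to K_h$ to get a uniform $\delta/2$-gap for the polytopes when $t<\tau$, and concludes that an edge $F$ with a normal in $\operatorname{supp}f$ cannot reach $H_h$; the structural facts about edges of $P^N_{h,t}$ away from $H_h$ (cited from \cite{HS14}) then give $F\in\mathcal{E}_r$. You instead unpack the Minkowski-average structure of $P^N_{h,t}$ directly, decompose the face $F$ into its one-dimensional summands, and exclude the top (type (ii)) edges of $Q=R(t\mathcal{C}_N)\cap H_h^-$ by comparing the $e_3$-coordinates of their normal arcs ($\ge -1/\sqrt{1+4h}+O(t)$) with those of $\omega_h$ ($<-1/\sqrt{1+2h}$). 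This is more explicit and self-contained, at the price of having to reconstruct part of the structural analysis that the paper simply cites; your final step (``a one-dimensional Minkowski sum of such factors and vertices is an edge \dots\ the classification of such edges from \cite{HS14} then places $F$ in some $\mathcal{E}_r$'') is where you lean on the same external facts anyway, so the two arguments are complementary rather than independent. For $C(\varepsilon)$ the paper offers no argument (``It is clear \dots''), whereas your use of the orthogonality $\langle v_F,u\rangle=0$ together with $u=-e_3+\delta$ to pin $v_F$ near $\ell_r$ is a clean and welcome addition. Two small points: combining both (\ref{6.1}) and (\ref{6.2}) actually gives $\|\delta\|=O(\varepsilon)$, not $O(\sqrt\varepsilon)$, so your ``absorb the square root'' step is unnecessary (though harmless); and the exclusion of top edges should include a word about degenerate normal cones and about $u$ possibly lying on the boundary of $\nu(P^N_{h,t},F)$, both of which are easy to rule out by a small perturbation of $\operatorname{supp}f$.
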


\begin{proof} 
Let $u\in {\rm supp}\,f$, then $u\in\omega_h$ and hence
$$ \langle u,-e_3 \rangle \ge \sqrt{\frac{1}{1+2h}}$$
and 
$$ |\langle u,a \rangle|\le \sqrt{\frac{2h}{1+2h}}\quad\mbox{for $a\in{\mathbb R}^2$ with } \|a\|=1.$$
Therefore, $h>0$ can be chosen such that assumption $A(\varepsilon)$ is satisfied.

To show that assumption $B(\varepsilon)$ can be satisfied, let $F\in{\mathcal F}_f(P^N_{h,t})$, then $\nu(P^N_{h,t},F) \cap {\rm supp}\,f\not=\emptyset$, hence we can choose $u\in \nu(P^N_{h,t},F)$ with $u\in\omega_h$. Let $H(K,u)$ denote the supporting plane of the convex set $K$ with outer normal vector $u$. Let $H_h$ be the boundary plane of the halfspace $H^-_h$. There is a number $\delta>0$ such that each plane $H(K,u)$ with $u\in\omega_h$ has distance at least $\delta$ from the set $K\cap H_h=K_h\cap H_h$. Therefore, there is a number $\tau>0$ (depending on $\varepsilon$) such that for $0<t<\tau$ each supporting plane $H(P^N_{h,t},u)$ with $u\in\omega_h$ has distance at least $\delta/2$ from $K_h\cap H_h$. For these $t$, the edge $F$ cannot contain a point of $H_h$ and hence must be an edge of 
$R(t{\mathcal C})$ lying above some edge $F^\Box$ of $t{\mathcal C}_N$. Hence, if we assume that $0<t<\tau$, then  $F\in {\mathcal E}_r(P^N_{h,t})$ for some $r\in\{0,\dots,N-1\}$. Thus, assumption $B(\varepsilon)$ is satisfied if $0<t<\tau$.

Let $F\in{\mathcal F}_f(P^N_{h,t})$. Then $F\in {\mathcal E}_r(P^N_{h,t})$ for some $r$, by (\ref{6.0}). It is clear that (\ref{6.4}) and (\ref{6.3}) hold if $h>0$ has been chosen sufficiently small. 
\end{proof}

\section{Completing the Proof of Theorem \ref{Thm4.2}}\label{sec7}

Recall that we are given a mapping $\Gamma': {\mathcal K}^3\times \B(\Sigma^3)\to \T^p$ which has  properties (a)--(e) of  Theorem \ref{Thm4.2} and which on polytopes $P$ is of the form
\begin{equation}\label{1.2} 
\Gamma'(P,\cdot)=\sum_{m,s\ge 0, \,j\ge 2\atop 2m+2j+s=p} c_{mjs}Q^m\phi_1^{0,s,j}(P,\cdot) + \sum_{m,s\ge 0,\, j\ge 1\atop 2m+2j+s+2=p} a_{mjs}Q^m \widetilde \phi^{0,s,j}(P,\cdot).
\end{equation} 
We assume that here not all coefficients $c_{mjs},a_{mjs}$ are zero, and we want to reach a contradiction. 

The number $N\in\{2,3,5,7,\dots\}$ will be chosen later, in a way that depends only on the coefficients $c_{mjs}, a_{mjs}$ and thus only on $\Gamma'$. As long as no specific $N$ has been chosen, the assertions involving $N$ hold for any $N\in\{2,3,5,7,\dots\}$.

With the function $f$ chosen in the previous section, we define for $K\in{\mathcal K}^3$ the tensor
$$ \Gamma'(K,f):= \int_{\Sigma^3} f(u)\,\Gamma'(K,\D(x,u)).$$
We also define
$$ W_1(K,f) := 2\pi\int_{{\mathbb S}^2} f(u)\,\Psi_1(K,\D u),$$
where $\Psi_1(K,\cdot)$ is the first order area measure of $K$ (see \cite{Sch14}, Section 4.2). In particular, for $P\in{\mathcal P}^3$, 
$$ W_1(P,f) = \sum_{F\in{\mathcal F}_1(P)} \Ha^1(F) \int_{\nu(P,F)}f\,\D\Ha^1.$$

We will apply $\Gamma'(K,f)$ to arguments of the form
\begin{equation}\label{6.5} 
E_i=(b_1,\dots,b_p)=(\underbrace{a,\dots,a}_{p-i},\underbrace{-e_3,\dots,-e_3}_{i})\quad\mbox{with } a\in {\mathbb R}^2, \,\|a\|=1,
\end{equation}
where $1\le i\le p$. We write
\begin{equation}\label{6.6} 
E_i':= (\underbrace{a,\dots,a}_{p-i}).
\end{equation}

In the following, we say that a symmetric tensor $T$ on $\R^3$ is {\em ${\rm SO}(3,e_3)$ invariant} if
$$ T(\vartheta E_i') = T(E_i') \quad \mbox{for all }\vartheta\in {\rm SO}(3,e_3),$$
for $E_i'$ given by (\ref{6.6}) with $a\in\R^2$, and $\vartheta E_i':= (\vartheta a,\dots,\vartheta a)$.

\begin{lemma}\label{Lem1}
There exist a number $i\in\{1,\dots,p\}$ and a tensor $\Upsilon\in\T^{p-i}$, which depends only on $\Gamma'$ and is not ${\rm SO}(3,e_3)$ invariant, such that the following holds.

Let $N\in\{2,3,5,7,\dots\}$. For given $\varepsilon>0$, let $h,\tau>0$ be chosen such that assumptions $A(\varepsilon),B(\varepsilon),C(\varepsilon)$ are satisfied for $0<t<\tau$, and let $0<t<\tau$. Then, for all $E_i, E_i'$ according to $(\ref{6.5}), (\ref{6.6})$,
$$ \Gamma'(P^N_{h,t},f)(E_i) = N^{-1} W_1(P^N_{h,t},f)\Upsilon(E_i') + R(E_i')$$
with
$$ |R(E_i')| \le C W_1(P^N_{h,t},f)\varepsilon,$$
where $C$ is a constant depending only on $\Gamma'$.
\end{lemma}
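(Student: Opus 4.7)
My plan is to substitute the explicit representations (\ref{4.1}) and (\ref{2.2}) for $\phi_1^{0,s,j}$ and $\widetilde\phi^{0,s,j}$ on polytopes into (\ref{1.2}), use the assumptions $A(\varepsilon),B(\varepsilon),C(\varepsilon)$ to replace, in each integrand, $v_F$ by $\ell_r$, $u$ by $-e_3$, and $v_F\times u$ by $\ell_r\times(-e_3)$, and then exploit the discrete rotational symmetry $\vartheta_N P^N_{h,t}=P^N_{h,t}$ to extract the factor $N^{-1}W_1(P^N_{h,t},f)$. The residual scalar, assembled from the coefficients of $\Gamma'$, becomes the tensor $\Upsilon(E_i')$.

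Concretely, using $Q_{L(F)}=v_F\odot v_F$ for edges $F$, formula (\ref{4.1}) gives
$$
Q^m\phi_1^{0,s,j}(P^N_{h,t},f)(E_i)=\frac{1}{s!\,\omega_{2+s}}\sum_{F\in\F_1(P^N_{h,t})} \bigl(Q^m v_F^{2j}\bigr)(E_i)\int_F\int_{\nu(P^N_{h,t},F)} f(u)\,u^s(E_i)\,\Ha^1(\D u)\,\Ha^1(\D x),
$$
with an analogous formula for $Q^m\widetilde\phi^{0,s,j}$ carrying the additional factor $\bigl(Q^m v_F^{2j+1}(v_F\times u)\bigr)(E_i)$. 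Since ${\rm supp}\,f\subset\omega_h$, only edges $F\in{\mathcal F}_f(P^N_{h,t})$ contribute, and assumption $B(\varepsilon)$ forces every such $F$ into some class ${\mathcal E}_r(P^N_{h,t})$. For $F\in{\mathcal E}_r\cap{\mathcal F}_f(P^N_{h,t})$ and $u\in\nu(P^N_{h,t},F)\cap{\rm supp}\,f$, assumptions $A(\varepsilon)$ and $C(\varepsilon)$ yield $|\langle v_F-\ell_r,b\rangle|,\,|\langle u+e_3,b\rangle|,\,|\langle v_F\times u-\ell_r\times(-e_3),b\rangle|\le C\varepsilon$ for every unit vector $b\in\R^3$. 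Expanding the symmetric tensor contractions multilinearly then shows that each integrand equals a constant $\widetilde T_r(E_i)$ (obtained by performing these replacements, so that it depends on $r,E_i$ and the coefficients of $\Gamma'$ but not on $x,u$) up to an error of order $\varepsilon$ uniformly in $(x,u)$.

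Integrating the leading constants against $f(u)\Ha^1(\D u)\Ha^1(\D x)$ produces, for each $F\in{\mathcal E}_r$, the weight $\Ha^1(F)\int_{\nu(P^N_{h,t},F)}f\,\D\Ha^1$. Because $\vartheta_N$ sends ${\mathcal E}_0$ bijectively onto ${\mathcal E}_r$ (modulo the choice of canonical unit vector, which the expressions $v_F^{2j}$ and $v_F^{2j+1}(v_F\times u)$ both handle correctly), and because both $P^N_{h,t}$ and $f$ are $\vartheta_N$-invariant, the sum $W_r:=\sum_{F\in{\mathcal E}_r}\Ha^1(F)\int_{\nu(P^N_{h,t},F)}f\,\D\Ha^1$ is independent of $r$; by $B(\varepsilon)$ these classes exhaust the edges seen by $f$, so $\sum_{r=0}^{N-1}W_r=W_1(P^N_{h,t},f)$ and $W_r=N^{-1}W_1(P^N_{h,t},f)$. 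Setting $\Upsilon(E_i'):=\sum_{r=0}^{N-1}\widetilde T_r(E_i)$ (the $-e_3$-slots of $E_i$ being absorbed into scalar contractions against the $u$- and $Q$-factors, so that the expression genuinely depends only on $E_i'$ and defines a tensor in $\T^{p-i}$) gives the required decomposition, with residual term $R(E_i')$ bounded in absolute value by $CW_1(P^N_{h,t},f)\varepsilon$ thanks to the uniform pointwise estimate, where $C$ depends only on $\Gamma'$.

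Finally, $i$ and $N$ must be chosen so that $\Upsilon$ is not ${\rm SO}(3,e_3)$-invariant. Writing $a=(\cos\alpha)e_1+(\sin\alpha)e_2$, one has $\langle\ell_r,a\rangle=\cos(r\beta_N-\alpha)$ and $\langle\ell_r\times(-e_3),a\rangle=-\sin(r\beta_N-\alpha)$, so $\widetilde T_r(E_i)$ is a linear combination of $\cos^{2j}(r\beta_N-\alpha)$ (from the $\phi$-summands) and $\cos^{2j+1}(r\beta_N-\alpha)\sin(r\beta_N-\alpha)$ (from the $\widetilde\phi$-summands), with coefficients proportional to $c_{mjs}$ or $a_{mjs}$ multiplied by combinatorial factors. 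I choose $i$ so that one of the extremal-$j$ summands with nonzero coefficient survives, i.e.\ so that $E_i$ has exactly enough $a$- and $-e_3$-slots for the corresponding $v_F^{2j}$- or $v_F^{2j+1}(v_F\times u)$-type contraction to be nonzero in the limit; Fourier-expand the resulting trigonometric monomial in $\alpha$ and sum over $r=0,\dots,N-1$. Only those harmonics $e^{ik\alpha}$ whose order $k$ is an appropriate multiple of $N$ survive the $r$-sum, so picking $N$ small enough relative to the maximal $j$ in $\Gamma'$ preserves at least one non-constant harmonic in $\alpha$ and thereby the non-invariance of $\Upsilon$. The main obstacle is exactly this last bookkeeping step: one must track the symmetric contractions $Q^m v_F^{2j}u^s(E_i)$ and $Q^m v_F^{2j+1}(v_F\times u)u^s(E_i)$ explicitly enough to argue that the extremal-$j$ harmonic cannot be cancelled by contributions coming from smaller $j$ or from the metric factors $Q^m$.
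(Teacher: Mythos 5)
Your outline agrees with the paper's strategy up to the point where the work actually gets done: you substitute the explicit polytope formulas from (\ref{4.1}) and (\ref{2.2}) into (\ref{1.2}), invoke $A(\varepsilon)$, $B(\varepsilon)$, $C(\varepsilon)$ to freeze $v_F\approx\ell_r$, $u\approx-e_3$, $v_F\times u\approx\ell_r\times(-e_3)$ with $O(\varepsilon)$ errors, and use $\vartheta_N$-symmetry of $P^N_{h,t}$ and $f$ to pull out $N^{-1}W_1(P^N_{h,t},f)$. All of that matches the paper. But the proof has a genuine gap at its crux, which you yourself flag as the ``main obstacle'': you never actually establish that $\Upsilon$ fails to be ${\rm SO}(3,e_3)$-invariant, and your sketch of how this would go is not correct.

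Two specific problems. First, your criterion for choosing $i$ is off. You say you pick $i$ ``so that one of the extremal-$j$ summands with nonzero coefficient survives,'' i.e.\ tying the choice to the extremal $j$. The paper instead picks $i=s_0$ (resp.\ $i=t_0$), the \emph{minimal} exponent $s$ appearing with a nonzero coefficient $c_{mjs}$ (resp.\ $a_{mjs}$). The point of that choice is the pigeonhole step: with exactly $s_0$ copies of $-e_3$ among the arguments, any summand with $s>s_0$ is forced to feed at least one $a$ into $u^s(\cdot)$, making it $O(\varepsilon)$ by (\ref{6.9}); the terms with $s=s_0$ survive. That isolates a sub-family of coefficients, and only \emph{within} that sub-family is the extremal $j$ (the $d$ or $b$ of the paper) taken. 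Conflating the two (pick $i$ by $j$) loses this mechanism and does not explain why the other homogeneity components vanish to leading order. Second, your claim that ``picking $N$ small enough relative to the maximal $j$'' preserves a non-constant harmonic points the wrong way. After the $r$-sum, the harmonic $e^{-2ik\alpha}$ from $\cos^{2j}(r\beta_N-\alpha)$ survives iff $N\mid k$, so you need $N$ to \emph{divide} $d$ (and $d$ bounded below by $N$), not $N$ small. The paper chooses $N=d$ when $d$ is odd and $N=2$ when $d$ is even, and then still has to rule out cancellation between different $j$'s (and, in Case~5, between the $\phi$- and $\widetilde\phi$-contributions, with a case split on $d$ versus $b+1$). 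It does this by decomposing $F_\nu(\lambda)=P(\lambda)+\sqrt{1-\lambda^2}\,Q(\lambda)$, analytically continuing past $\lambda=1$, and reading off the top-degree coefficient as $\lambda\to\infty$ from (\ref{6.25})--(\ref{6.28}); this is what makes the ``extremal $j$ cannot be cancelled'' statement rigorous. Your Fourier sketch could in principle be turned into a complete argument, but as written it asserts exactly the thing that needs to be proved and asserts the wrong dependence on $N$.

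One minor remark in your favor: the worry about cancellation coming ``from the metric factors $Q^m$'' is not a real issue -- for arguments in the $e_1,e_2$-plane (or equal to $-e_3$), $Q^m$ contributes only a constant, so it cannot introduce $\alpha$-dependence; the genuine cancellation risk is among different $j$'s and between the two families of coefficients, which is what the case analysis in the paper is designed to handle.
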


When Lemma \ref{Lem1} has been proved, the proof of Theorem \ref{Thm4.2} can be completed as follows. Since the tensor $\Upsilon\in\T^{p-i}$ is not ${\rm SO}(3,e_3)$ invariant, there are an argument $E_i'$ and a rotation $\vartheta \in {\rm SO}(3,e_3)$ such that
$$ |\Upsilon(\vartheta E_i')-\Upsilon(E_i')|=:M>0,$$
where the constant $M$ depends only on $\Gamma'$. Lemma \ref{Lem1} with $E_i:= (E_i',-e_3,\dots,-e_3)$ gives
$$ \Gamma'(P^N_{h,t},f)(E_i) = N^{-1} W_1(P^N_{h,t},f)\Upsilon(E_i') + R(E_i')$$
and a similar relation for $\vartheta E_i'$, hence
\begin{align*}
& |\Gamma'(P^N_{h,t},f)(\vartheta E_i) - \Gamma'(P^N_{h,t},f)(E_i)|\\
&= |N^{-1} W_1(P^N_{h,t},f)\Upsilon(\vartheta E_i') +R(\vartheta E_i') - N^{-1} W_1(P^N_{h,t},f)\Upsilon(E_i') -R(E_i')|\\
& \ge N^{-1}  W_1(P^N_{h,t},f)|\Upsilon(\vartheta E_i')-\Upsilon(E_i')|- |R(\vartheta E_i')|- |R(E_i')|\\
&\ge N^{-1}   W_1(P^N_{h,t},f)(M-2NC\varepsilon).
\end{align*}
Since the constants $M>0$ and $C$ and the number $N$ are independent of $\varepsilon$, we can choose $\varepsilon>0$ so small that $M-2NC\varepsilon>0$. There is a constant $W>0$ such that $N^{-1}W_1(P^N_{h,t},f)\ge W$ for all sufficiently small $t>0$ (see the proof in \cite{HS14}, p. 1556). Hence, for all sufficiently small $t>0$, we have 
$$ |\Gamma'(P^N_{h,t},f)(\vartheta E_i)-\Gamma'(P^N_{h,t},f)(E_i)| \ge W(M-2NC\varepsilon)>0.$$
From (\ref{5.3}) and the weak continuity of $\Gamma'$ we get
$$ |\Gamma'(K_h,f)(\vartheta E_i) -\Gamma'(K_h,f)(E_i)|\ge W(M-2NC\varepsilon)>0,$$
which, because of $\vartheta K_h=K_h$ and the ${\rm SO}(3,e_3)$ invariance of $f$, contradicts the rotation covariance of $\Gamma'$. This contradiction proves Theorem \ref{Thm4.2}.

\vspace{3mm}

\noindent{\em Proof of  Lemma \ref{Lem1}.}

We may assume that $v_F$, for $F\in {\mathcal E}_r(P^N_{h,t})$, is the canonical unit vector for $F$, and that $A(\varepsilon),B(\varepsilon),C(\varepsilon)$ are satisfied. For $P\in{\mathcal P}^3$ we get from (\ref{1.2}), (\ref{4.1}) and (\ref{2.2}) an explicit representation of $\Gamma'(P,\cdot)$. Changing the notation, replacing $c_{mjs}/s!\omega_{s+2}$ by $c_{mjs}$ (which is irrelevant, since both are $\not=0$) and recalling that $Q_{L(F)}=v_F^2$, we get, for $E_i$ according to (\ref{6.5}),
\begin{align}\label{6.7}
& \Gamma'(P,f)(E_i)  \nonumber\\
& =\sum_{m,s\ge 0, \,j\ge 2\atop 2m+2j+s=p} c_{mjs}  \sum_{F\in{\mathcal F}_1(P)}  \Ha^1(F) \int_{\nu(P,F)} \left(Q^m v_F^{2j}u^s\right)(E_i) f(u)\,\Ha^1(\D u) \nonumber\\
& \hspace{4mm} + \sum_{m,s\ge 0,\, j\ge 1\atop 2m+2j+s+2=p} a_{mjs} \sum_{F\in{\mathcal F}_1(P)}  \Ha^1(F) \int_{\nu(P,F)} \left(Q^m v_F^{2j+1}(v_F\times u)u^s\right)(E_i)f(u)\,\Ha^1(\D u). 
\end{align}
When we apply this to $P=P^N_{h,t}$, we have to observe that
$$ \int_{\nu(P^N_{h,t},F)} \left(Q^m v_F^{2j}u^s\right)(E_i) f(u)\,\Ha^1(\D u) \not=0$$
implies $\nu(P^N_{h,t},F)\cap{\rm supp}\,f\not=\emptyset$, hence $F\in{\mathcal F}_f(P^N_{h,t})$, and then (\ref{6.0}) yields $F\in{\mathcal E}_r(P^N_{h,t})$ for some $r\in \{0,\dots,N-1\}$. A similar observation concerns the second sum in (\ref{6.7}). Therefore, in the following, only those edges of $P^N_{h,t}$ need to be taken into account which belong to ${\mathcal E}_r(P^N_{h,t})$ for some $r$. Thus, we get
\begin{equation}\label{6.14}
\Gamma'(P^N_{h,t},f)(E_i) =\sum_{m,s\ge 0, \,j\ge 2\atop 2m+2j+s=p} c_{mjs}\,S_{mjs}(E_i) + \sum_{m,s\ge 0,\, j\ge 1\atop 2m+2j+s+2=p} a_{mjs} \,T_{mjs}(E_i)
\end{equation}
with
\begin{align}
S_{mjs}(E_i) &= \sum_{r=0}^{N-1} \sum_{F\in{\mathcal E}_r(P^N_{h,t})}  \Ha^1(F) \int_{\nu(P^N_{h,t},F)} \left(Q^m v_F^{2j}u^s\right)(E_i) f(u)\,\Ha^1(\D u), \label{6.15}\\
T_{mjs}(E_i) &=\sum_{r=0}^{N-1} \sum_{F\in{\mathcal E}_r(P^N_{h,t})}  \Ha^1(F) \int_{\nu(P^N_{h,t},F)} \left(Q^m v_F^{2j+1}(v_F\times u)u^s\right)(E_i)f(u)\,\Ha^1(\D u). \label{6.16}
\end{align}
According to the definition of the symmetric tensor product, the terms appearing here in the integrands are explicitly given by
\begin{align}\label{6.20}
& \left(Q^m v_F^{2j} u^s\right)(E_i) \nonumber\\
&= \frac{1}{p!} \sum_{\sigma\in {\mathcal S}(p)} Q^m(b_{\sigma(1)},\dots, b_{\sigma(2m)})v_F^{2j}(b_{\sigma(2m+1)}, \dots,b_{\sigma(2m+2j)})\nonumber\\
&  \hspace{4mm}\times u^s(b_{\sigma(2m+2j+1)},\dots,b_{\sigma(p)}) 
\end{align}
and
\begin{align}\label{6.21}
& \left(Q^m v_F^{2j+1} (v_F\times u) u^s\right)(E_i)\nonumber \\
&= \frac{1}{p!} \sum_{\sigma\in {\mathcal S}(p)} Q^m(b_{\sigma(1)},\dots, b_{\sigma(2m)}) v_F^{2j+1}(b_{\sigma(2m+1)},\dots,b_{\sigma(2m+2j+1)})\nonumber\\
& \hspace{4mm}\times  (v_F\times u)(b_{\sigma(2m+2j+2)}) 
u^s(b_{\sigma(2m+2j+3)},\dots,b_{\sigma(p)}). 
\end{align}

Since the components of $E_i=(b_1,\dots,b_p)=(a,\dots,a,-e_3,\dots,-e_3)$ are unit vectors, we have
\begin{equation}\label{6.8} 
|Q^m(\cdot)|\le 1,\quad |v_F^{2j}(\cdot)|\le 1, \quad |v_F^{2j+1}(\cdot)|\le 1, \quad |u^s(\cdot)|\le 1, \quad |(v_F\times u)(\cdot)|\le 1.
\end{equation}
If at least one argument of $u^s$ is equal to $a$, then it follows from (\ref{6.2}) that
\begin{equation}\label{6.9}   
|u^s(\cdot)|\le\varepsilon.
\end{equation}
We have $u^s(-e_3,\dots,-e_3)= \langle u,-e_3\rangle^s$ and, by (\ref{6.1}), $\langle u,-e_3\rangle \ge 1-\varepsilon$, hence
\begin{equation}\label{6.10}  
1\ge u^s(-e_3,\dots,-e_3)\ge 1-s\varepsilon.
\end{equation}
The face $F$ which we have to consider belongs to some ${\mathcal E}_r(P^N_{h,t})$, and then (\ref{6.4}) yields
\begin{equation}\label{6.12}  
|v_F^{2j}(a,\dots,a)-\ell^{2j}_r(a,\dots,a)| \le 2j\varepsilon. 
\end{equation}

If not all coefficients $c_{mjs}$ are zero, we denote by $s_0$ the smallest number $s$ for which $c_{mjs}\not=0$ for some $m,j$.  If not all coefficients $a_{mjs}$ are zero, we denote by $t_0$ the smallest number $s$ for which $a_{mjs}\not=0$ for some $m,j$.

Now we have to distinguish several cases. In the following, we denote by $C$ a constant (not always the same) that depends only on $\Gamma'$.

\noindent{\bf Case 1:} Not all $c_{mjs}$ are zero, not all $a_{mjs}$ are zero, and $t_0>s_0$. 

In (\ref{6.5}), we choose $i=s_0$. In the second sum of (\ref{6.14}) we have $s\ge t_0> s_0$ whenever $a_{mjs}\not=0$ for some $m,j$. Hence, in (\ref{6.21}) (for $i=s_0$), each term $u^s(\cdot)$ contains at least one argument equal to $a$. Therefore, it follows from (\ref{6.8}) and (\ref{6.9}) that $|T_{mjs}(E_{s_0})|\le C\varepsilon$. 

By the same argument, we have $|(Q^m v_F^{2j} u^s)(E_{s_0})|\le C\varepsilon$ if $s>s_0$. 

In (\ref{6.20}) for $s=s_0$, each summand in which the term $u^{s_0}(\cdot)$ has at least one argument equal to $a$, has absolute value less than $C\varepsilon$. In the remaining $s_0!(p-s_0)!$ summands, all arguments of $u^{s_0}(\cdot)$ are equal to $-e_3$, and in these summands, we have
$$ |u^{s_0}(\cdot)-1|\le C\varepsilon$$
by (\ref{6.10}) and 
$$ |v_F^{2j}(\cdot)-\ell_r^{2j}(\cdot)|\le C\varepsilon$$
by (\ref{6.12}), with $r$ determined by $F$.

To simplify the estimates, we set 
$$ q:= (p-s_0)/2, \qquad c_j:= \binom{p}{s_0}^{-1}c_{(q-j)js_0}.$$
Let $d$ be the largest $j\in\{2,\dots,q\}$ for which $c_j\not=0$. Further, we set
$$ W_{1,r}(P^N_{h,t},f):= \sum_{F\in {\mathcal E}_r(P^N_{h,t})} \Ha^1(F)\int_{\nu(P^N_{h,t},F)} f\,\D\Ha^1.$$
It follows from (\ref{5.1}) and the rotational symmetry of $f$ that this is the same for all $r$, hence
$$ W_{1,r}(P^N_{h,t},f)= N^{-1}W_1(P^N_{h,t},f).$$

Taking the collected estimates together, we arrive at
$$ \Gamma'(P^N_{h,t},f)(E_{s_0}) = N^{-1} W_1(P^N_{h,t},f) \Upsilon_1(E'_{s_0}) + R_1(E'_{s_0})$$
with
$$ \Upsilon_1= \sum_{j=2}^d c_jQ^{q-j} \sum_{r=0}^{N-1} \ell_r^{2j}$$
and
$$ |R_1(E'_{s_0})|\le CW_1(P^N_{h,t},f)\varepsilon.$$
Here $c_d\not=0$. Since $N$ will later be chosen in dependence of $\Gamma'$ alone, the tensor $\Upsilon_1$ depends only on $\Gamma'$.

\vspace{3mm}

\noindent{\bf Case 2:} All $a_{mjs}$ are zero.

Then not all $c_{mjs}$ are zero. We arrive at the same conclusion as in Case 1.

\vspace{3mm}

\noindent{\bf Case 3:} Not all $c_{mjs}$ are zero, not all $a_{mjs}$ are zero, and $s_0>t_0$. 

In (\ref{6.5}), we choose $i=t_0$. In the first sum of (\ref{6.14}) we have $s\ge s_0> t_0$ whenever $c_{mjs}\not=0$ for some $m,j$. Hence, in (\ref{6.20}) (for $i=t_0$), each term $u^s(\cdot)$ contains at least one argument equal to $a$. Therefore, it follows from (\ref{6.8}) and (\ref{6.9}) that $|S_{mjs}(E_{t_0})|\le C\varepsilon$. 

By the same argument, we have $|(Q^m v_F^{2j+1}(v_F\times u) u^s)(E_{t_0})|\le C\varepsilon$ if $s>t_0$. 

In (\ref{6.21}) for $s=t_0$, each summand in which the term $u^{t_0}(\cdot)$ has at least one argument equal to $a$, has absolute value less than $C\varepsilon$. In the remaining $t_0!(p-t_0)!$ summands, all arguments of $u^{t_0}(\cdot)$ are equal to $-e_3$, and in these summands, we have
$$ |u^{t_0}(\cdot)-1|\le C\varepsilon$$
by (\ref{6.10}), 
$$ |(v_F\times u)(\cdot) -(\ell_r\times -e_3)(\cdot)| \le C\varepsilon$$
by (\ref{6.3}), and
$$ |v_F^{2j}(\cdot)-\ell_r^{2j}(\cdot)|\le C\varepsilon$$
by (\ref{6.12}), with $r$ determined by $F$.

We set, in this case,
$$ q:= (p-t_0)/2, \qquad a_j:= \binom{p}{t_0}^{-1}a_{(q-j-2)jt_0}.$$
Let $b$ be the largest $j\in\{2,\dots,q\}$ for which $a_j\not=0$. Similarly as in Case 1, we obtain
$$ \Gamma'(P^N_{h,t},f)(E_{t_0}) = N^{-1} W_1(P^N_{h,t},f) \Upsilon_2(E'_{t_0}) + R_2(E'_{t_0})$$
with
$$ \Upsilon_2= \sum_{j=1}^b a_j Q^{q-j-2} \sum_{r=0}^{N-1} \ell_r^{2j+1}(\ell_r\times -e_3)$$
and
$$ |R_2(E'_{t_0})|\le CW_1(P^N_{h,t},f)\varepsilon.$$
Here $a_b\not=0$.

\vspace{3mm}

\noindent{\bf Case 4:} All $c_{mjs}$ are zero.

Then not all $a_{mjs}$ are zero. We arrive at the same conclusion as in Case 3.

\vspace{3mm}

\noindent{\bf Case 5:} Not all $c_{mjs}$ are zero, not all $a_{mjs}$ are zero, and $s_0=t_0$. 

In (\ref{6.5}), we choose $i=s_0 \;(=t_0)$. As in Cases 1 and 3, we arrive at
$$ \Gamma'(P^N_{h,t},f)(E_{s_0}) = N^{-1} W_1(P^N_{h,t},f) \Upsilon_3(E'_{s_0}) + R_3(E'_{s_0})$$
with
$$ \Upsilon_3= \sum_{j=2}^d c_jQ^{q-j} \sum_{r=0}^{N-1} \ell_r^{2j} + \sum_{j=1}^b a_j Q^{q-j-2} \sum_{r=0}^{N-1} \ell_r^{2j+1}(\ell_r\times -e_3)$$
and
$$ |R_3(E'_{s_0})|\le CW_1(P^N_{h,t},f)\varepsilon.$$
Here  $c_d\not=0$ and $a_b\not=0$.

\vspace{3mm}

It remains to show that none of the tensors $\Upsilon_1,\Upsilon_2,\Upsilon_3$ is ${\rm SO}(3,e_3)$ invariant. Let $\nu\in\{1,2,3\}$. For $x(\lambda)=(\lambda,\sqrt{1-\lambda^2},0)$ with $\lambda\in[0,1)$, we define the function
$$ F_\nu(\lambda) := \Upsilon_\nu(\underbrace{x(\lambda),\dots,x(\lambda)}_{2q}).$$
If $\Upsilon_\nu$ is ${\rm SO}(3,e_3)$ invariant, then the function $F_\nu$ is constant on $[0,1)$. 

Since $x(\lambda)$ is a unit vector, we get
\begin{align}
F_1(\lambda) &= \sum_{j=2}^d c_j\sum_{r=0}^{N-1} \langle\ell_r,x(\lambda)\rangle^{2j}  \nonumber\\
& = \sum_{j=2}^dc_j\sum_{r=0}^{N-1} \left(\lambda\cos r\beta_N+\sqrt{1-\lambda^2}\sin r\beta_N\right)^{2j}, \label{6.23}\\
F_2(\lambda) &= \sum_{j=1}^b a_j\sum_{r=0}^{N-1} \langle\ell_r,x(\lambda)\rangle^{2j+1}\det(\ell_r,-e_3,x(\lambda)) \nonumber\\
& = \sum_{j=1}^b a_j\sum_{r=0}^{N-1} \left(\lambda\cos r\beta_N+\sqrt{1-\lambda^2}\sin r\beta_N\right)^{2j+1} \left( -\lambda\sin r\beta_N+\sqrt{1-\lambda^2}\,\cos r\beta_N\right)\label{6.24}
\end{align}
and $F_3=F_1+F_2$. 
We have
$$ F_1(\lambda) = P(\lambda)+\sqrt{1-\lambda^2}\,Q(\lambda),\qquad F_2(\lambda) = \widetilde P(\lambda)+ \sqrt{1-\lambda^2} \,\widetilde Q(\lambda)$$
with polynomials $P,Q,\widetilde P,\widetilde Q$ (which are defined for all real $\lambda$). Suppose, for example, that $F_1(\lambda)=c$ with a constant $c$. Then 
$$ (P(\lambda)-c)^2 = (1-\lambda)(1+\lambda) Q(\lambda)^2$$
for all real $\lambda$. If $P-c$ is not identically zero, then $\lambda=1$ is a root of $(P-c)^2$ with odd multiplicity, a contradiction. Therefore, $P=c$ and $Q=0$. Similar assertions hold if $F_2$ or $F_3$ is constant.

Our aim is to show that $F_i$ is not constant for $i=1,2,3$. For this purpose it is convenient to 
 consider $\lambda$ as a complex variable. The polynomials $P,Q,\widetilde P,\widetilde Q$ are defined for all $\lambda\in {\mathbb C}$. The function $\lambda\mapsto \sqrt{1-\lambda^2}$, $\lambda\in (-1,1)$, has a univalent analytic continuation to the complex plane with the set
 $E:=\{a+{\rm i}\, b\in\mathbb{C}: a,b\in\R, ab\ge 0, a^2=1+b^2\}$ removed. More explicitly, we define $\sqrt{z}=\sqrt{r}\, e^{{\rm i}\, \varphi/2}$ for $z=r\, e^{{\rm i}\, \varphi} \in \mathbb{C}\setminus{\rm i}\,\R_{\le 0}$ with $r>0$ and $\varphi\in (-\pi/2,3\pi/2)$ 
and observe that $\lambda \in \mathbb{C}\setminus E$ if and only if $1-\lambda^2\in \mathbb{C}\setminus{\rm i}\,\R_{\le 0}$. This yields  
the analytic continuation $\lambda\mapsto\sqrt{1-\lambda^2}$  for $\lambda\in\mathbb{C}\setminus E$. Moreover, we have  $\sqrt{1-\lambda^2}=\sqrt{\lambda^2-1}\, {\rm i}$ for $\lambda\in \R_{>1}\subset \mathbb{C}\setminus E$. Thus we also obtain the univalent analytic continuation of $F_i$ to the connected domain $\mathbb{C}\setminus E$ for $i=1,2,3$. Since $F_i$ is not constant on $[0,1)$ if 
the analytic continuation of $F_i$ is not constant on $(1,\infty)$, we    
consider limits through real $\lambda\to\infty$. In particular, we obtain 
\begin{align}\label{6.25} 
\lim_{\lambda\to\infty} \frac{F_1(\lambda)}{\lambda^{2d}} = c_d\sum_{r=0}^{N-1} \left(\cos r\beta_N +{\rm i}\sin r\beta_N\right)^{2d} = c_d\sum_{r=0}^{N-1} e^{{\rm i}\cdot 2dr\beta_N}
\end{align}
and hence
\begin{equation}\label{6.26}
\lim_{\lambda\to\infty} \frac{P(\lambda)}{\lambda^{2d}} =c_d\, {\rm Re}\sum_{r=0}^{N-1}  e^{{\rm i}\cdot 2dr\beta_N},\qquad
\lim_{\lambda\to\infty} \frac{Q(\lambda)}{\lambda^{2d-1}} =c_d\, {\rm Im}\sum_{r=0}^{N-1}  e^{{\rm i}\cdot 2dr\beta_N}.
\end{equation}
In a similar way we get
\begin{align} \label{6.27}
\lim_{\lambda\to\infty} \frac{F_2(\lambda)}{\lambda^{2b+2}} &= a_b\sum_{r=0}^{N-1} \left(\cos r\beta_N +{\rm i}\sin r\beta_N\right)^{2b+1} (-\sin r\beta_N+ {\rm i}\cos r\beta_N) \nonumber\\
& = a_b\sum_{r=0}^{N-1} {\rm i}\, e^{{\rm i}\cdot 2(b+1)r\beta_N}
\end{align}
and thus
\begin{equation}\label{6.28}
\lim_{\lambda\to\infty} \frac{\widetilde P(\lambda)}{\lambda^{2b+2}} = -a_b\, {\rm Im}\sum_{r=0}^{N-1}  e^{{\rm i}\cdot 2(b+1)r\beta_N},\qquad
\lim_{\lambda\to\infty} \frac{\widetilde Q(\lambda)}{\lambda^{2b+1}} = a_b\, {\rm Re}\sum_{r=0}^{N-1}  e^{{\rm i}\cdot 2(b+1)r\beta_N}.
\end{equation}

\vspace{3mm}

We show that $\Upsilon_1$ is not ${\rm SO}(3,e_3)$ invariant.

If $d$ is even, we choose $N=2$. Then $F_1$ is a polynomial, and we have
$$ F_1(\lambda) = \sum_{j=2}^d c_j \left(\lambda^{2j}+ (1-\lambda^2)^j\right) = 2c_d\lambda^{2d}+(\mbox{lower order terms}).$$
Since $c_d\not=0$ in Cases 1 and 2, the function $F_1$ is not constant.

If $d$ is odd, we choose $N=d$. Then
$$ \sum_{r=0}^{N-1}  e^{{\rm i}\cdot 2dr\beta_N}=d.$$
From (\ref{6.26}) and $c_d\not=0$ we see that $P$ is not constant, hence $F_1$ is not constant.

We show that $\Upsilon_2$ is not ${\rm SO}(3,e_3)$ invariant.

If $b$ is odd, we choose $N=2$. Then we have
\begin{align*}
F_2(\lambda) &= \sum_{j=1}^b a_j \left(\lambda^{2j+1} \sqrt{1-\lambda^2} -\lambda \sqrt{1-\lambda^2}^{\,2j+1}\right)\\
& = \sqrt{1-\lambda^2}\left(2a_b\lambda^{2b+1}+(\mbox{lower order terms})\right).
\end{align*}
Since $a_b\not=0$ in Cases 3 and 4, the function $F_2$ is not constant.

If $b$ is even, we choose $N=b+1$. Then  $\sum_{r=0}^{N-1}  e^{{\rm i}\cdot 2(b+1)r\beta_N}=b+1$. From (\ref{6.28}) and $a_b\not=0$ we see that $\widetilde Q$ is not identically zero, hence the function $F_2$ is not constant.

To show that $\Upsilon_3$ is not ${\rm SO}(3,e_3)$ invariant, we have to show that the function $F_1+F_2$ is not constant on $[0,1)$. We have
$$F_1(\lambda)+F_2(\lambda)= P(\lambda)+\widetilde P(\lambda) +\sqrt{1-\lambda^2}\left(Q(\lambda)+\widetilde Q(\lambda) \right).$$
If $F_1+F_2$ is constant, then $P+\widetilde P$ is constant and $Q+\widetilde Q$ is identically zero.

We distinguish three cases.

\noindent {\bf Case (i):} $d>b+1$.

If $d$ is even, we choose $N=2$ and get
$$ F_1(\lambda)+F_2(\lambda) = 2c_d\lambda^{2d}+(\mbox{lower order terms}) + \sqrt{1-\lambda^2}\cdot(\mbox{polynomial in $\lambda$}).$$
Since $c_d\not=0$, this is not a constant function.

If $d$ is odd, we choose $N=d$ and obtain from (\ref{6.26}) that $P$ is of degree $2d$ and from (\ref{6.28}) that $\widetilde P$ is of degree at most $2b+2<2d$. Therefore, $P+\widetilde P$ is not constant, hence $F_1+F_2$ is not constant.

\noindent {\bf Case (ii):} $d<b+1$.

If $b$ is odd, we choose $N=2$ and get
$$ F_1(\lambda)+F_2(\lambda) = (\mbox{polynomial in $\lambda$}) + \sqrt{1-\lambda^2}\cdot\left(2a_b\lambda^{2b+1}+(\mbox{lower order terms})\right).$$
Since $a_b\not=0$, this is not a constant function.
 
If $b$ is even, we choose $N=b+1$ and obtain from (\ref{6.28}) that $\widetilde Q$ is of degree $2b+1$ and from (\ref{6.26}) that $Q$ is of degree at most $2d-1<2b+1$. Therefore, $Q+\widetilde Q$ is not the zero polynomial, hence $F_1+F_2$ is not constant.

\noindent {\bf Case (iii):} $d=b+1$.

For even $d$, we choose $N=2$ and get
$$ F_1(\lambda)+F_2(\lambda) = 2c_d\lambda^{2d} +(\mbox{l. o. t.}) +\sqrt{1-\lambda^2}\left(2a_b\lambda^{2b+1} +(\mbox{l. o. t.})\right),$$
which is not a constant function.

For odd $d$, we choose $N=d=b+1$ and see from (\ref{6.26}) that $P$ has degree $2d$. Since $\sum_{r=0}^{N-1} e^{{\rm i}\cdot 2(b+1)r\beta_N}
=N$ is real, (\ref{6.28}) shows that $\widetilde P$ has degree less than $2b+2=2d$. Therefore, $P+\widetilde P$ is not constant, and hence $F_1+F_2$ is not constant.

This completes the proof of Lemma \ref{Lem1} and thus the proof of Theorem \ref{Thm4.2}.

\section{The Global Tensor Valuations in Dimension Three}\label{sec8}

As we have seen, tensor-valued support measures on $\Rn$ with the usual properties that are ${\rm SO}(n)$ covariant but not ${\rm O}(n)$ covariant, exist precisely if $n=2$ or $n=3$. The total (or global) values of these tensor measures (i.e., the measures evaluated at $\Sigma^n$) yield tensor valuations which are covered by Alesker's characterization theorem (extended to ${\rm SO}(n)$-covariance). In this and the next section, we have a closer look at these global tensor valuations.

In the present section, we assume that $n=3$, and we consider the  tensor valuations defined by the total values of the local tensor valuations $\widetilde \phi^{r,s,0}$, that is, the mappings
$$ T_{r,s}:{\mathcal K}^3 \to \T^{r+s+2}(\R^3), \quad T_{r,s}(K)=\widetilde \phi^{r,s,0}(K,\Sigma^3), \quad K\in{\mathcal K}^3,\, r,s\in{\mathbb N}_0.$$
Thus, for a polytope $P\in\cP^3$ we have
\begin{equation}\label{8.1} 
T_{r,s}(P) = \sum_{F\in\F_1(P)}v_F \int_F x^r\,\cH^1(\D x)\int_{\nu(P,F)} (v_F\times u)u^s\,\cH^1(\D u).
\end{equation}
It follows from Section \ref{sec3} that $T_{r,s}$ is continuous on ${\mathcal K}^3$, moreover, it is a translation covariant, ${\rm SO}(3)$ covariant valuation. As Alesker \cite[p. 246]{Ale99b} pointed out after the proof of his characterization theorem, the replacement of ${\rm O}(n)$-covariance by ${\rm SO}(n)$-covariance in his characterization theorem yields no new valuations in dimensions $n\ge 3$. Therefore, $T_{r,s}$ must, in fact, be ${\rm O}(3)$ covariant. On the other hand, if $\vartheta\in{\rm O}(3)$ changes the orientation, then $T_{r,s}(\vartheta K)= -\vartheta T_{r,s}(K)$ if $K$ is a polytope, and by continuity this extends to general convex bodies $K$. It follows that $T_{r,s}=0$. For this fact, which here is a consequence of Alesker's deep characterization theorem, we want to give a direct proof, which does not need representation theory but uses only some known elementary facts about valuations.

\begin{proposition}\label{Prop2}
The tensor valuation $T_{r,s}$ is identically zero, for $ r,s\in{\mathbb N}_0$. 
\end{proposition}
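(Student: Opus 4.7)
The plan is to compute $T_{r,s}(P)$ directly on polytopes $P$, reach zero by a two-step telescoping/primitive argument, and then extend the vanishing to all of $\mathcal{K}^3$ by continuity. The key technical identity is the symmetric-tensor chain rule
\[
\xi(t)^k\, \dot\xi(t) \;=\; \frac{d}{dt}\,\frac{\xi(t)^{k+1}}{k+1},
\]
which I apply twice: first on each arc $\nu(P,F)$ to collapse its $u$-integral to endpoint values, and then on each boundary loop $\partial G$ of a $2$-face $G$ to show that the regrouped sum vanishes.

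For the inner integral in formula~(\ref{8.1}), the map $u \mapsto v_F \times u$ is rotation by $\pi/2$ about the axis $v_F$, so one can parametrize the arc $\nu(P,F)$ by $\theta \mapsto u(\theta)$ with $\dot u(\theta) = v_F \times u(\theta)$ (which fixes the sign of $v_F$). The identity above with $k=s$ then yields
\[
\int_{\nu(P,F)} (v_F \times u) u^s \, \cH^1(\D u) \;=\; \frac{1}{s+1}\bigl( u_{G_+(F)}^{s+1} - u_{G_-(F)}^{s+1}\bigr),
\]
where $G_-(F), G_+(F)$ denote the two $2$-faces of $P$ containing $F$, labelled so that the arc starts at $u_{G_-(F)}$ and ends at $u_{G_+(F)}$. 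Substituting into~(\ref{8.1}) and regrouping by $2$-faces, the coefficient of $u_G^{s+1}/(s+1)$ for $G \in \F_2(P)$ becomes
\[
\sum_{F \subset \partial G} \varepsilon_F^G\, v_F \int_F x^r\, \cH^1(\D x),
\qquad \varepsilon_F^G := \begin{cases} +1, & G = G_+(F), \\ -1, & G = G_-(F). \end{cases}
\]

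A right-hand-rule check shows that $\varepsilon_F^G v_F$, up to a single overall sign depending only on $G$, coincides with the oriented tangent of $F$ in $\partial G$ when $\partial G$ is oriented so that $G$ lies on the left relative to $u_G$ (equivalently, $u_G \times v$ is the inward in-plane normal to $\partial G$ at an edge point). Therefore the displayed coefficient equals, up to sign, $\oint_{\partial G} \gamma(t)^r \dot\gamma(t)\, dt$ for the piecewise-affine parametrization $\gamma$ of $\partial G$; a second application of the primitive identity with $k=r$ expresses the integrand as a total derivative, so its integral around the closed polygonal loop vanishes. Hence $T_{r,s}(P) = 0$ for every $P \in \cP^3$. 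To pass to $\mathcal{K}^3$, Theorem~\ref{theorem 3.1} provides weak continuity of $\widetilde\phi^{r,s,0}$; if $K_i \to K$ in the Hausdorff metric, the generalized normal bundles $\Nor K_i$ all lie in a common compact subset of $\Sigma^3$, so $T_{r,s}(K)=\widetilde\phi^{r,s,0}(K,\Sigma^3)$ can be evaluated against a compactly supported continuous bump equal to $1$ on all these supports, and weak continuity delivers $T_{r,s}(K_i) \to T_{r,s}(K)$.

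The one delicate point is the orientation bookkeeping at the transition between the two applications of the primitive identity: verifying that the arc-parametrization sign of $v_F$ matches the $\partial G$-orientation tangent precisely as asserted, so that the regrouped inner integrals become genuine closed-loop integrals. This amounts to a careful right-hand-rule check at a single edge (easily confirmed on, say, an edge of a unit cube) and must be carried out precisely so that no spurious sign accumulates across the two steps.
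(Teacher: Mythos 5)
Your proof is correct, and it is a genuinely different route from the one in the paper. The paper first evaluates $T_{r,s}(P)(a,\dots,a)$ for \emph{two-dimensional} polygons $P$ (a direct trigonometric computation plus the divergence theorem), concludes that $\varphi_{r,s}:=T_{r,s}(\cdot)(a,\dots,a)$ is a simple, continuous, homogeneous real valuation on $\mathcal{K}^3$, and then appeals to the structure theory of translation invariant simple valuations (Theorems 6.4.10 and 6.4.13 of~\cite{Sch14}), together with a reflection-covariance argument, to force $\varphi_{r,s}=0$. You instead show $T_{r,s}(P)=0$ directly for \emph{all} full-dimensional polytopes $P$ by two applications of the fundamental theorem of calculus in the symmetric tensor algebra: first over each normal arc $\nu(P,F)$ to write $\int_{\nu(P,F)}(v_F\times u)u^s\,\cH^1(\D u)=\tfrac{1}{s+1}\bigl(u_{G_+(F)}^{s+1}-u_{G_-(F)}^{s+1}\bigr)$, then, after regrouping by $2$-faces, around each closed polygonal loop $\partial G$ to kill the remaining sum; weak continuity then finishes. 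This avoids the classification theorems for simple valuations entirely and is, in that sense, more elementary than the paper's ``direct'' proof. The orientation bookkeeping you flag is exactly the point that needs care, and it does work out: one checks (e.g.\ on a cube, as you suggest) that $\varepsilon_F^G v_F$ is the \emph{negative} of the oriented tangent of $\partial G$ (with $G$ on the left w.r.t.\ $u_G$) uniformly in both $F$ and $G$, so the inner sum becomes $-\oint_{\partial G}\gamma^r\dot\gamma\,\D t=0$. Two small remarks: (i) your argument as written applies to $3$-dimensional polytopes, where every edge bounds exactly two $2$-faces; this is all you need, since full-dimensional polytopes are dense and $T_{r,s}$ is continuous. (ii) The bump-function device in the continuity step is unnecessary: the paper's definition of weak continuity admits arbitrary continuous test functions, so taking $f\equiv 1$ gives $T_{r,s}(K_i)\to T_{r,s}(K)$ immediately.
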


\begin{proof}
First let $P\in\cP^3$ be a polygon, $\dim P=2$. We choose a positively oriented, orthonormal basis $(e_1,e_2,e_3)$ of $\R^3$ and identify the space spanned by $e_1,e_2$ with $\R^2$. The basis $(e_1,e_2)$ induces an orientation of $\R^2$. Without loss of generality, we assume that $P\subset\R^2$. Set
$$ a=\lambda_1e_1+\lambda_2e_2+\lambda_3e_3 \quad\mbox{with } \lambda_i\in\R.$$
We evaluate
$$ T_ {r,s}(P)(\underbrace{a,\dots,a}_{r+s+2})=  \sum_{F\in\F_1(P)}\langle v_F,a\rangle \int_F \langle x,a\rangle^r\,\cH^1(\D x)\int_{\nu(P,F)} \det(v_F,u,a)\langle u,a\rangle^s\, \cH^1(\D u).$$

Let $F\in\F_1(P)$ be a given edge. Let $u_F\in\R^2$ be the outer unit normal vector of the polygon $P$ at its edge $F$. There is a unique angle $\beta_F\in[0,2\pi)$ with
$$ u_F = (\cos\beta_F)e_1+(\sin\beta_F)e_2.$$
We choose the unit vector $v_F$ in such a way that $(u_F,v_F)$ is a positively oriented basis of $\R^2$, then 
$$ v_F= -(\sin\beta_F)e_1+(\cos\beta_F)e_2.$$
For $u\in \nu(P,F)$, there is a unique angle $\alpha\in[-\pi/2,\pi/2]$ such that
\begin{align*}
u &= (\cos\alpha)u_F+(\sin\alpha)e_3\\
&= (\cos\alpha\cos\beta_F)e_1 +(\cos\alpha\sin\beta_F)e_2+ (\sin\alpha)e_3.
\end{align*}
We obtain
\begin{align*}
\det(v_F,u,a) &= -\lambda_3\cos\alpha +(\lambda_1\cos\beta_F+\lambda_2\sin\beta_F)\sin\alpha,\\
&= \langle u_F,a\rangle\sin\alpha-\lambda_3\cos\alpha,\\
\langle u,a\rangle &= (\lambda_1\cos\beta_F +\lambda_2\sin\beta_F)\cos\alpha+\lambda_3\sin\alpha\\
&= \langle u_F,a\rangle\cos\alpha +\lambda_3\sin\alpha.
\end{align*}
Writing, for the moment, $\langle u_F,a\rangle =A$ and $\lambda_3=B$ (for clearer visibility), we see that
$$
\int_{\nu(P,F)} \det(v_F,u,a)\langle u,a\rangle^s\,\cH^1(\D u)
= \int_{-\pi/2}^{\pi/2}(A\sin\alpha-B\cos\alpha)(A\cos\alpha+B\sin\alpha)^s\,\D\alpha.
$$
We choose $\gamma\in[0,2\pi)$ such that 
$A=\sqrt{A^2+B^2}\cos\gamma$ and $B= \sqrt{A^2+B^2}\sin\gamma$, and hence
$$
(A\sin\alpha-B\cos\alpha)(A\cos\alpha+B\sin\alpha)^s=\sqrt{A^2+B^2}^{\,s+1}\sin(\alpha-\gamma)\cos^s(\alpha-\gamma).
$$
Therefore, we get
$$
\int_{\nu(P,F)} \det(v_F,u,a)\langle u,a\rangle^s\,\cH^1(\D u)
=-\sqrt{A^2+B^2}^{\,s+1}\frac{1}{s+1}2\sin^{s+1}\gamma=-\frac{2}{s+1}\lambda_3^{s+1},
$$
which is independent of the edge $F$. 

Let $\rho\in{\rm SO}(2)$ be the unique rotation that maps $u_F$ to $v_F$. In the following, we use the identity
$$ \sum_{F\in\F_1(P)} \langle u_F,t\rangle \int_F x^r\,\cH^1(\D x) = rt\int_P x^{r-1}\,\cH^2(\D x)$$
for $t\in\R^2$, which follows from the divergence theorem; see \cite[(5.106) and (4.3)]{Sch14}. We obtain
\begin{align*}
\sum_{F\in\F_1(P)} \langle v_F,a\rangle \int_Fx^r\, \cH^1(\D x)
& =\sum_{F\in\F_1(P)} \langle u_F,\rho^{-1} a\rangle \int_F x^r\, \cH^1(\D x)\\
& =r(\rho^{-1} a)\int_P x^{r-1}\, \cH^2(\D x)
\end{align*}
and hence
\begin{equation}\label{eqn1} 
\sum_{F\in\F_1(P)} \langle v_F,a\rangle \int_F \langle x,a\rangle^r\,\cH^1(\D x) = r\langle\rho^{-1}a,a\rangle\int_P \langle x,a\rangle^{r-1}\,\cH^2(\D x) =0.
\end{equation} 
Therefore, $T_{r,s}(P)(a,\dots,a)=0$.

Now we write $\varphi_{r,s}(K)= T_{r,s}(K)(a,\dots,a)$ for $K\in{\mathcal K}^3$. Since $P$ above can be an arbitrary two-dimensional polygon and $T_{r,s}$ is continuous, we have $\varphi_{r,s}(K)=0$ for all convex bodies $K$ of dimension $\dim K\le 2$. Thus, $\varphi_{r,s}$ is a real valuation on ${\mathcal K}^3$ which is continuous, simple and homogeneous of degree $r+1$. Its translation behaviour follows from (\ref{8.1}) and continuity, namely
\begin{equation}\label{8.2} 
\varphi_{r,s}(K+t) =\sum_{j=0}^r\binom{r}{j}\varphi_{r-j,s}(K)\langle t,a\rangle^j
\end{equation}
for $K\in{\mathcal K}^3$ and $t\in\R^3$. Generally, if a relation
$$ \varphi_{r,s}(K+t) = \sum_{j=0}^q\psi_j(K)\langle t,a\rangle^j$$
holds for all $K\in{\mathcal K}^3$ and all $t\in\R^3$ (where $a\not=0$), by computing $\varphi_{r,s}(K+t+\overline t)$ in two different ways, we can conclude that $\psi_q(K+t)= \psi_q(K)$, that is, $\psi_q$ is translation invariant. 

Assume, for the moment, that $\varphi_{r,s}$ is translation invariant. From Theorems 6.4.10 and 6.4.13 in \cite{Sch14} (and the fact that $\varphi_{r,s}$ can be written as the sum of an even and an odd valuation), it follows that 
\begin{equation}\label{8.3} 
\varphi_{r,s}(K) = c V_3(K) +\int_{{\mathbb S}^2} g(u)\,S_2(K,\D u) \quad\mbox{for }K\in{\mathcal K}^3,
\end{equation}
where $V_3$ is the volume, $S_2(K,\cdot)$ is the area measure of $K$, $c$ is a constant, and $g:{\mathbb S}^2\to\R$ is an odd continuous function. One can assume that $g$ has no linear part, that is, $\int_{{\mathbb S}^2} g(u)u\, \D u=0$, and then $g$ is uniquely determined (as follows from \cite[Thm.~6.4.9]{Sch14}).

Now $\varphi_{0,s}$ is, in fact, translation invariant. Since $\varphi_{0,s}$ is homogeneous of degree one, whereas the summands in (\ref{8.3}) are homogeneous of degrees three and two, respectively, we conclude that $\varphi_{0,s}=0$. Next, it follows from (\ref{8.2}) and the remark made after it that $\varphi_{1,s}$ is translation invariant. By (\ref{8.3}) and since $\varphi_{1,s}$ is homogeneous of degree two,
$$ \varphi_{1,s}(K)=\int_{{\mathbb S}^2} g(u)\,S_2(K,\D u).$$
Let $\vartheta\in{\rm O}(3)$ be a reflection at a $2$-dimensional subspace containing $a$. Then
$$ T_{1,s}(\vartheta K)(a,\dots,a) = T_{1,s}(\vartheta K) (\vartheta a,\dots,\vartheta a) = -T_{1,s}(K)(a,\dots,a),$$
hence
$$ \int_{{\mathbb S}^2} g(\vartheta u)\,S_2(K,\D u) =  \int_{{\mathbb S}^2} g(u)\,S_2(\vartheta K,\D u) = -\int_{{\mathbb S}^2} g(u)\,S_2(K,\D u).$$
Since this holds for all $K\in{\mathcal K}^3$ and since the continuous function $g$ has no linear part, it follows that $g(\vartheta u)=-g(u)$ for all $u\in{\mathbb S}^2$. This holds for all reflections $\vartheta$ at 2-subspaces containing $a$. From this, it is easy to deduce that $g=0$. Thus, $\varphi_{1,s}=0$. Again by (\ref{8.2}), this implies that $\varphi_{2,s}$ is translation invariant. By (\ref{8.3}) and since $\varphi_{2,s}$ is homogeneous of degree three, $\varphi_{2,s}(K)= cV_3(K)$ for $K\in{\mathcal K}^3$. For $\vartheta\in{\rm O}(3)$ as above, we have $ T_{2,s}(\vartheta K)(a,\dots,a) = T_{2,s}(\vartheta K) (\vartheta a,\dots,\vartheta a) = -T_{2,s}(K)(a,\dots,a)$, hence $cV_3(\vartheta K)=-cV_3(K)$, which gives $c=0$. Therefore, $\varphi_{2,s}=0$. Now it follows that $\varphi_{r,s}=0$ for all $r\ge 3$, since $\varphi_{r,s}$ is homogeneous of degree $r+1>3$.

Thus, for any $K\in{\mathcal K}^3$ we have $T_{r,s}(K)(a,\dots,a)=0$ for all vectors $a\in\R^3$. By multilinearity, this implies that $T_{r,s}(K)=0$. Therefore, $T_{r,s}=0$.
\end{proof}

\section{Linear Dependences in Dimension Two}\label{sec9}

In this section we assume that $n=2$. We consider the total values of the tensor measures (\ref{A}) and (\ref{B}) and write (not caring about normalizations, and using the support measures $\Theta_k$, see \cite[Sec. 4.2]{Sch14}), for $K\in {\mathcal K}^2$ and $k\in\{0,1\}$,
$$ \Psi_k^{r,s}(K) = \int_{\Sigma^2} x^ru^s\,\Theta_k(K,\D(x,u)),$$
together with
$$ \Psi_2^r(K)= \int_K x^r\,\cH^2(\D x),$$
and
\begin{equation}\label{9.0} 
\widetilde \Phi_k^{r,s}(K) =\int_{\Sigma^2} x^r u^s\overline u\,\Theta_k(K,\D(x,u)).
\end{equation}
For $p\in\N_0$, we denote by $T^p_+({\mathcal K}^2)$ the real vector space spanned by all tensor valuations
\begin{equation}\label{9.1}
Q^m\Psi_k^{r,s},\enspace k\in\{0,1\},\, m,r,s\in\N_0,\, 2m+r+s=p, \qquad Q^m\Psi_2^r,\enspace  m,r\in\N_0, \, 2m+r=p,
\end{equation}
and by $T^p_-({\mathcal K}^2)$ the real vector space spanned by all tensor valuations
\begin{equation}\label{9.2}
Q^m\widetilde{\Phi}_k^{r,s}(K),\enspace k\in\{0,1\},\,m,r,s\in\N_0,\, 2m+r+s+1=p.
\end{equation}
It follows from Alesker \cite[Thm. 4.1]{Ale99b} (together with the relation $\overline{u}^2=Q-u^2$ for $u\in \mathbb{S}^1$) that the real vector space of all mappings $\Gamma:{\mathcal K}^2\times \B(\Sigma^2)\to\T^p$ that are translation covariant, ${\rm SO}(2)$ covariant, continuous valuations, is equal to $T^p_+({\mathcal K}^2)+T^p_-({\mathcal K}^2)$. The two subspaces $T^p_+({\mathcal K}^2)$ and $T^p_-({\mathcal K}^2)$ are complementary to each other; this follows by an argument similar to that before Proposition \ref {Prop2}. The linear dependences between the tensor valuations (\ref{9.1}) and the dimension of the vector space $T^p_+({\mathcal K}^2)$ were determined in \cite{HSS08a}. It remains to determine a basis of $T^p_-({\mathcal K}^2)$. The tensor valuations (\ref{9.0}) are, in fact, not linearly independent.

\begin{theorem}\label{Thm9.1}
The tensor valuations $\rm (\ref{9.0})$ satisfy the relations
\begin{equation}\label{9.3}
\widetilde \Phi_1^{r,0}=0,\enspace \widetilde \Phi_0^{0,s} =0\quad\mbox{for } r,s\in\N_0,
\end{equation}
\begin{equation}\label{9.4}
r\widetilde \Phi_1^{r-1,s} + s\widetilde \Phi_0^{r,s-1}=0 \quad\mbox{for } r,s\in\N.
\end{equation}
A basis of $T^p_-({\mathcal K}^2)$ is given by
\begin{equation}\label{9.5}
Q^m \widetilde \Phi_1^{r,s},\enspace m\in\N_0,\,r,s\in\N,\, 2m+r+s+1=p.
\end{equation}
\end{theorem}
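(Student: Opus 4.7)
The plan is to verify the identities (9.3) and (9.4) by direct calculation on polytopes (extending to $\mathcal{K}^2$ by continuity of the support measures), then to use them to reduce the defining generating set of $T^p_-(\mathcal{K}^2)$ to the claimed basis, and finally to check linear independence. Both (9.3) and (9.4) rest on two ``total derivative'' observations. Along an edge $F$ of a polygon $P$ with counter-clockwise tangent $\overline{u}_F$ and endpoints $v_F^\pm$, parametrising $x(t)=v_F^-+t\overline{u}_F$ gives $\overline{u}_F\,x(t)^r = \frac{1}{r+1}\frac{d}{dt}x(t)^{r+1}$, so $\overline{u}_F\int_F x^r\,\mathrm{d}\mathcal{H}^1 = \frac{1}{r+1}\bigl((v_F^+)^{r+1}-(v_F^-)^{r+1}\bigr)$; along the normal arc $\nu(P,v)$ at a vertex $v$, parametrised by $u(\alpha)$, one has $u'(\alpha)=\overline u$ and $u^s\overline u\,\mathrm{d}\alpha=\frac{1}{s+1}\mathrm{d}(u^{s+1})$. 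Summing the first formula over the edges of $P$ produces a telescoping sum that gives $\widetilde\Phi_1^{r,0}(P)=0$, and summing the second over vertices telescopes because each edge normal $u_F$ is an endpoint of two adjacent arcs with opposite signs, yielding $\widetilde\Phi_0^{0,s}(P)=0$.

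For (9.4), combining the two parametrisations on a polytope gives
\[
r\,\widetilde\Phi_1^{r-1,s}(P) = \sum_F u_F^s\bigl[(v_F^+)^r-(v_F^-)^r\bigr], \qquad s\,\widetilde\Phi_0^{r,s-1}(P) = \sum_v v^r\bigl[u_{v,+}^s-u_{v,-}^s\bigr],
\]
and reorganising the vertex sum edge-by-edge via $u_{v_F^-,+}=u_F=u_{v_F^+,-}$ converts it into $\sum_F u_F^s[(v_F^-)^r-(v_F^+)^r]$, which cancels the first. After passing to $\mathcal{K}^2$ by continuity, the spanning part of the basis assertion follows: (9.3) kills $\widetilde\Phi_1^{r,0}$ and $\widetilde\Phi_0^{0,s}$, and (9.4) rewritten as $\widetilde\Phi_0^{R,S}=-\tfrac{R}{S+1}\widetilde\Phi_1^{R-1,S+1}$ (for $R\ge 1$) eliminates all remaining $\widetilde\Phi_0$-terms in favour of $\widetilde\Phi_1^{r,s}$ with $s\ge 1$.

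For linear independence of the family $\{Q^m\widetilde\Phi_1^{r,s}\}$, the first step is to decouple by homogeneity: $\widetilde\Phi_1^{r,s}$ scales as $\lambda^{r+1}$ under $K\mapsto\lambda K$ (only the $x$-integration picks up the factor), so any linear dependence splits into pieces sharing a common $r$. Within fixed $r$, the integer $m$ is determined by $s$ via $2m+r+s+1=p$, and distinct values of $s$ produce tensor valuations with different ``angular content'' in $u$, which can be separated by evaluating on a family of test polygons such as parallelograms of varying aspect ratio or rotated regular polygons, on which the edge-sum for $\widetilde\Phi_1^{r,s}$ is explicit and each power of $u$ contributes a distinct Fourier mode. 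The main obstacle is precisely this final step: ruling out hidden algebraic relations beyond (9.3) and (9.4) requires either a careful coefficient extraction on explicit bodies or, alternatively, a dimension count matching the proposed basis against the dimension of $T^p_-(\mathcal{K}^2)$ predicted by Alesker's classification of translation-covariant, $\mathrm{SO}(2)$-covariant, continuous tensor valuations.
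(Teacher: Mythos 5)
The telescoping proofs of (\ref{9.3}) and (\ref{9.4}) are correct and form a genuine alternative to the paper's argument. The paper first obtains $\widetilde\Phi_1^{r,0}(P)=0$ for polygons via a divergence-theorem identity (equation (\ref{eqn1}), already used in Section \ref{sec8}) and then derives \emph{both} (\ref{9.3}) and (\ref{9.4}) simultaneously by inserting $K+\rho B^2$ and expanding in $\rho$ via the local Steiner formula for the support measure $\Theta_1$. Your direct approach --- telescoping $x^{r+1}$ along edges and $u^{s+1}$ along normal arcs, and then matching the edge-sum against the reindexed vertex-sum --- produces the same identities from first principles without the Steiner formula, and the reorganisation via $u_{v_F^-,+}=u_F=u_{v_F^+,-}$ is exactly right. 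The reduction of the spanning set to the $\widetilde\Phi_1^{r,s}$ with $s\ge 1$ also goes through.

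The linear independence step, however, is a genuine gap: you name the problem but do not solve it. The homogeneity decoupling (degree $r+1$, so distinct $r$ separate) is correct and is also the paper's first step, but the remaining claim that ``distinct values of $s$ produce tensor valuations with different angular content'' separable on test polygons is not substantiated, and the suggested fallback --- a dimension count against Alesker's classification --- is circular here, because determining $\dim T^p_-(\mathcal{K}^2)$ is precisely what the theorem is for. The paper's actual argument within a fixed homogeneity degree $k=r+1$ is concrete: replace $K$ by $K+t$, expand in $t$, and extract the translation-invariant coefficient to reduce a putative relation to
$$
\sum_{m} a_m\, Q^m \int_{\bS^1} u^{p-k-2m}\,\overline u\; S_1(K,\D u)=0 \quad\text{for all } K\in\mathcal{K}^2.
$$
It then invokes the characterization of the kernel of $K\mapsto\int f\, S_1(K,\cdot)$ --- the integrand must be the restriction to $\bS^1$ of a linear function (\cite[Thm.~6.4.9]{Sch14}) --- and evaluates the resulting tensor identity on $(e,\dots,e)$ in angular coordinates $u=(\cos\alpha)e+(\sin\alpha)\overline e$, where a parity argument in $\alpha$ forces $a_m=0$ for all $m$. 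This is the rigorous version of the ``Fourier mode'' intuition you describe; what your proposal lacks is the use of the kernel of the surface area measure map $S_1$, which is the ingredient that converts ``the integral vanishes for all $K$'' into a pointwise trigonometric identity that can actually be analysed.
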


\begin{proof}
Let $P\in\mathcal{P}^2$ and  $a\in\R^2$. Then, for $a\in\R^2$,
$$
\widetilde{\Phi}^{r,0}_1(P)(\underbrace{a,\dots,a}_{r+1})=\sum_{F\in\mathcal{F}_1(P)}\langle \overline{u}_F,a\rangle\int_F\langle x,a\rangle^r\, \mathcal{H}^1(\D x)
=0,
$$
as shown in \eqref{eqn1}. By continuity, we thus get $\widetilde{\Phi}^{r,0}_1(K)=0$ for all $K\in \mathcal{K}^2$. Further relations are obtained by applying this to $K+\rho B^2$ with $\rho\ge 0$ and expanding as a polynomial in $\rho$. According to \cite[Thm. 4.2.7]{Sch14}, the measure $\Theta_1(K+\rho B^2,\cdot)$ is the image measure of the measure $\Theta_1(K,\cdot) +\rho\Theta_0(K,\cdot)$ under the mapping $t_\rho:{\rm Nor}\,K \to{\rm Nor}(K+\rho B^2)$, $t_\rho(x,u)= (x+\rho,u)$. Therefore, the transformation formula for integrals gives
\begin{align*}
0 &= \widetilde\Phi_1^{r,0}(K+\rho B^2) = \int_{{\rm Nor}(K+\rho B^2)} x^r\overline u\,\Theta_1(K+\rho B^2,\D(x,u))\\
&= \int_{{\rm Nor}\,K} (x+\rho u)^r\overline u\,\Theta_1(K,\D(x,u)) +\rho \int_{{\rm Nor}\,K} (x+\rho u)^r\overline u\,\Theta_0(K,\D(x,u))\\
&= \sum_{l=0}^r\binom{r}{l} \rho^l\int_{\Sigma^2} x^{r-l}u^l\overline u\,\Theta_1(K,\D(x,u)) + \sum_{l=0}^r\binom{r}{l} \rho^{l+1}\int_{\Sigma^2} x^{r-l}u^l\overline u\,\Theta_0(K,\D(x,u))\\
&= \widetilde \Phi_1^{r,0}(K) + \sum_{j=1}^r \rho^j\left[\binom{r}{j} \widetilde \Phi_1^{r-j,j}(K) + \binom{r}{j-1}\widetilde\Phi_0^{r-j+1,j-1}(K)\right] +\rho^{r+1}\widetilde\Phi_0^{0,r}(K).
\end{align*}
Since this holds for all $\rho\ge 0$, we obtain the equations (\ref{9.3}) and (\ref{9.4}).

These relations show that the valuations (\ref{9.5}) span the space $T^p_-({\mathcal K}^2)$. To show that they form a basis, we assume that there is a non-trivial linear relation between them. Then there must also be such a relation between valuations of the same degree of homogeneity; note that $Q^m \widetilde \Phi_1^{r,s}$ is homogeneous of degree $r+1$. Considering homogeneity of degree $k\in\{1,\dots,p\}$, we assume a linear relation
$$ \sum_{m=0}^{\lfloor\frac{p-k-1}{2}\rfloor} a_m Q^m\int_{\Sigma^2} x^{k-1}u^{p-k-2m}\overline u\,\Theta_1(K,\D(x,u))=0$$
with real constants $a_m$. Replacing $K$ by $K+t$ with $t\in\R^2$, we get
\begin{align*}
0 &=  \sum_{m=0}^{\lfloor\frac{p-k-1}{2}\rfloor} a_m Q^m\int_{\Sigma^2} (x+t)^{k-1}u^{p-k-2m}\overline u\,\Theta_1(K,\D(x,u))\\
&= \sum_{m=0}^{\lfloor\frac{p-k-1}{2}\rfloor} a_m Q^m \sum_{j=0}^{k-1} \binom{k-1}{j} \int_{\Sigma^2} x^{k-1-j}u^{p-k-2m}\overline u\,\Theta_1(K,\D(x,u))\,t^j.
\end{align*}
If a relation $\sum_{j=0}^{k-1} \Gamma_{p-j}t^j=0$ with fixed tensors $\Gamma_{p-j}\in \T^{p-j}$ holds for all $t$, we can conclude that $\Gamma_{p-j}=0$ (stepwise, using that the symmetric tensor algebra has no zero divisors). In particular, this yields
$$ \sum_{m=0}^{\lfloor\frac{p-k-1}{2}\rfloor} a_m Q^m \int_{\Sigma^2}u^{p-k-2m}\overline u\,\Theta_1(K,\D(x,u))=0. $$
This can also be written as 
\begin{equation}\label{9.6}
\int_{\bS^1} \left(\sum_{m=0}^{\lfloor\frac{p-k-1}{2}\rfloor} a_m Q^m u^{p-k-2m}\overline u \right)S_1(K,\D u)=0,
\end{equation}
where $S_1(K,\cdot)$ is the surface area measure in the plane. We apply this tensor to a $(p-k+1)$-tuple $(e,\dots,e)$ with a unit vector $e\in\bS^1$. Since (\ref{9.6}) holds for all $K\in {\mathcal K}^2$, the integrand is then the restriction of a linear function (e.g., see \cite[Theorem 6.4.9]{Sch14}), thus
\begin{equation}\label{9.7}
\sum_{m=0}^{\lfloor\frac{p-k-1}{2}\rfloor} a_m \langle u,e\rangle^{p-k-2m}\langle\overline u,e\rangle = \langle u,c(e)\rangle
\end{equation}
with a vector $c(e)\in\R^2$ depending on $e$. Relation (\ref{9.7}) holds for all $u\in\bS^1$. For fixed $e$, we write
$$ u=(\cos\alpha)e+(\sin\alpha)\overline e \quad \mbox{with}\enspace \alpha\in (-\pi,\pi],\quad c(e)=c_1e+c_2\overline e, $$
and obtain
$$\sum_{m=0}^{\lfloor\frac{p-k-1}{2}\rfloor} a_m (\cos\alpha)^{p-k-2m} (-\sin\alpha)=c_1\cos\alpha+c_2\sin\alpha.$$
Since the left side is an odd function of $\alpha$, we get $c_1=0$.  Then we can conclude that $a_m=0$ for all $m$.
\end{proof}

\noindent Authors' addresses:\\[2mm]
\parbox{6cm}{Daniel Hug\\
Karlsruhe Institute of Technology \\
Department of Mathematics \\
D-76128 Karlsruhe, Germany\\
e-mail: daniel.hug@kit.edu}
\hspace{1.5cm}
\parbox{7.5cm}{Rolf Schneider\\
Albert-Ludwigs-Universit\"at\\
Mathematisches Institut\\
D-79104 Freiburg i. Br., Germany\\
e-mail: rolf.schneider@math.uni-freiburg.de}

\end{document}